\begin{document}

\title{On DR-semigroups satisfying the ample conditions}
\author{Tim Stokes}

\date{}
\maketitle

\newcommand{\bea}{\begin{eqnarray*}}
\newcommand{\eea}{\end{eqnarray*}}

\newcommand{\ben}{\begin{enumerate}}
\newcommand{\een}{\end{enumerate}}

\newcommand{\bi}{\begin{itemize}}
\newcommand{\ei}{\end{itemize}}

\newcommand{\beq}{\begin{equation}}
\newcommand{\eeq}{\end{equation}}

\newenvironment{proof}{\noindent \textbf{Proof.}\hspace{.7em}}
                   {\hfill $\Box$
                    \vspace{10pt}}

\newcommand{\mc}{\mathcal}

\newtheorem{thm}{Theorem}[section]
\newtheorem{pro}[thm]{Proposition}
\newtheorem{lem}[thm]{Lemma}
\newtheorem{cor}[thm]{Corollary}
\newtheorem{example}{Example}

\newcommand{\dom}{\mbox{dom}}
\newcommand{\im}{\mbox{im}}

\begin{abstract}
A DR-semigroup $S$ (also known as a reduced E-semiabundant or reduced E-Fountain semigroup) is here viewed as a semigroup equipped with two unary operations $D,R$ satisfying finitely many equational laws.  Examples include DRC-semigroups (hence Ehresmann semigroups), which also satisfy the congruence conditions.  The ample conditions on DR-semigroups are studied here and are defined by the laws 
$$xD(y)=D(xD(y))x\mbox{ and }R(y)x=xR(R(y)x).$$  Two natural partial orders may be defined on a DR-semigroup and we show that the ample conditions hold if and only if the two orders are equal and the projections (elements of the form $D(x)$) commute with one-another.  Restriction semigroups satisfy the generalized ample conditions, but we give other examples using strongly order-preserving functions on a quasiordered set as well as  so-called ``double demonic" composition on binary relations.  Following the work of Stein, we show how to construct a certain partial algebra $C(S)$ from any DR-semigroup, which is a category if $S$ satisfies the congruence conditions, but is ``almost" a category if the ample conditions hold.  We then characterise the ample conditions in terms of a converse of the condition on $S$ ensuring that $C(S)$ is a category.  Our main result is an ESN-style theorem for DR-semigroups satisfying the ample conditions, based on the $C(S)$ construction.  We also obtain an embedding theorem, generalizing a result for restriction semigroups due to Lawson.
\end{abstract}

\noindent{\bf Keywords:} DR-semigroup, reduced E-Fountain semigroup, DRC-semigroup, category, ample conditions, ESN Theorem.
\medskip

\noindent{\bf 2020 Mathematics Subject Classification:} 20M50, 20M30.

\section{Preliminaries}

\subsection{Partial algebras}
 
First, some terminology for partial operations.  Let $S$ be a set with partial binary operation $\circ$.  We here say that $(S,\circ)$ is a {\em partial semigroup} if $x\circ(y\circ z)$ is defined if and only if $(x\circ y)\circ z$ is, and then they are equal; equivalently (as is well-known), it is a semigroup with zero, made into a partial algebra by removing $0$ and retaining all nonzero products. In a partial semigroup, we may unambiguously write $a\circ b\circ c$ (if it exists).  A number of other definitions of partial semigroups exist in the literature, but the one used here is perhaps the most commonly used one.

Here we define a {\em (small) partial category} $(C,\circ,D,R)$ as follows: it is a set $C$ equipped with a partial binary operation $\circ$ and two unary operations of ``domain" $D$  and ``range" $R$ such that
\begin{enumerate}[ (PC1)]
\item for all $x\in C$, $D(x)\circ x$ and $x\circ R(x)$ both exist and equal $x$
\item for all $x,y\in C$, if $x\circ y$ exists then $R(x)=D(y)$
\item $(C,\circ)$ is a partial semigroup.
\end{enumerate}
It is a {\em category} if it satisfies the following strengthening of Law (PC2) above:
\begin{itemize}
\item for all $x,y\in C$, $x\circ y$ exists if and only if $R(x)=D(y)$.
\end{itemize}

It follows easily that in the partial category $C$,
\begin{itemize}
\item for all $x\in C$, $R(D(x))=D(x)$, $D(R(x))=R(x)$, and
\item for all $x,y\in C$, if $x\circ y$ exists then $D(x\circ y)=D(x), R(x\circ y)=R(y)$.
\end{itemize}
This is because, for all $x\in C$, $D(x)\circ x$ exists and so $R(D(x))=D(x)$, and so if $x\circ y$ exists then so does $D(x\circ y)\circ (x\circ y)$ whence so must $D(x\circ y)\circ x$, and so $D(x\circ y)=R(D(x\circ y))=D(x)$; dually, $D(R(x))=R(x)$ and $R(x\circ y)=R(y)$.

Such an ``object-free" formulation of the concept of a category is frequently used in algebra; see for example \cite{FitzKin21} and \cite{Lawson91}.  
The more general partial category notion is due to Stein, who was led to define the notion of a {\em category with partial composition} in \cite{Stein24}.  Those are the ``with objects" form of the partial categories just defined.  

We shall see that partial categories arise naturally from DR-ample semigroups, but can also be used to construct them.  Examples are also easily obtained as subsets of small categories.
  
\begin{example} \label{eg:sat}
Suppose $(C,\circ,D,R)$ is a small category, with $P\subseteq C$.   We borrow terminology from ring theory and say that $P$ is {\em saturated} if for all $a,b\in C$ for which $a\circ b$ exists, $a\circ b\in P$ implies that both $a,b\in P$.  Then for saturated $P$, we define, for all $s,t\in P$, $s\cdot t=s\circ t$ providing $s\circ t\in P$, and undefined otherwise.

\begin{pro} If $C$ is a small category and $P\subseteq C$ is saturated, then $(P,\cdot,D,R)$ is a partial category.
\end{pro}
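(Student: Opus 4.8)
The plan is to verify the three axioms (PC1)--(PC3) for $(P,\cdot,D,R)$ directly, after first dealing with the one preliminary point that makes the statement sensible: one must check that $D$ and $R$ really map $P$ into itself, so that they are genuine unary operations on $P$. For this, given $s\in P$, axiom (PC1) in the small category $C$ gives the legitimate $C$-products $D(s)\circ s=s$ and $s\circ R(s)=s$. Since $s\in P$ and $P$ is saturated, the membership $s=D(s)\circ s\in P$ forces $D(s)\in P$, and likewise $s=s\circ R(s)\in P$ forces $R(s)\in P$. Hence $D,R$ restrict to unary operations on $P$.

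Granting that, (PC1) and (PC2) for $P$ are quick. For (PC1): for $s\in P$ we now have $D(s),R(s)\in P$, and the $C$-products $D(s)\circ s=s$ and $s\circ R(s)=s$ lie in $P$, so by the very definition of $\cdot$ we get $D(s)\cdot s=s=s\cdot R(s)$. For (PC2): if $s\cdot t$ is defined then by definition $s\circ t$ is defined in $C$, and (PC2) in $C$ yields $R(s)=D(t)$; note we do not even need the ``category'' strengthening of (PC2) here, and indeed $(P,\cdot)$ need not be a full category.

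The substantive step is (PC3), that $(P,\cdot)$ is a partial semigroup, and this is where saturation does the real work. Suppose $s\cdot(t\cdot u)$ is defined. Then $t\cdot u$ is defined, so $t\circ u$ exists in $C$ and lies in $P$; and $s\cdot(t\circ u)$ is defined, so $s\circ(t\circ u)$ exists in $C$ and lies in $P$. Since $(C,\circ)$ is a partial semigroup, $(s\circ t)\circ u$ exists in $C$ and equals $s\circ(t\circ u)\in P$. Applying saturation to this product gives $s\circ t\in P$ (and $u\in P$, already known), so $s\cdot t$ is defined and equals $s\circ t$; then $(s\circ t)\circ u\in P$ with $s\circ t,u\in P$ shows $(s\cdot t)\cdot u$ is defined and equals $(s\circ t)\circ u=s\circ(t\circ u)=s\cdot(t\cdot u)$. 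The reverse implication, that $(s\cdot t)\cdot u$ defined forces $s\cdot(t\cdot u)$ defined with the same value, is entirely symmetric. I expect this associativity clause to be the only place needing care, precisely because saturation must be invoked to re-enter $P$ after reassociating inside $C$; everything else is a routine unwinding of the definition of $\cdot$. Assembling the four items gives that $(P,\cdot,D,R)$ is a partial category.
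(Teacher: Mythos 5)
Your proof is correct and follows essentially the same route as the paper's: saturation gives $D(s),R(s)\in P$ from $D(s)\circ s=s=s\circ R(s)$, (PC2) is inherited directly, and the associativity clause is handled by reassociating in $C$ and using saturation to conclude that the inner products land back in $P$. No gaps.
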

\begin{proof}
First note that if $s\in P$ then because $D(s)\circ s=s$, also $D(s)\in 
P$, and similarly, $R(s)\in P$. So $D(s)\cdot s=s=s\cdot R(s)$.  Of course, if $x,y\in P$ and $x\cdot y$ exists in $P$, then so does $x\circ y$ in $C$, and so $R(x)=D(y)$.  Finally, if $x,y,z\in P$ and $(x\cdot y)\cdot z$ exists in $P$, then $x\circ (y\circ z)=(x\circ y)\circ z$ exists in $C$ and lies in $P$, so $y\circ z\in P$, and so $y\cdot z$ exists in $P$, and similarly for $x\cdot(y\cdot z)$ which equals $x\circ (y\circ z)=(x\circ y)\circ z=(x\cdot y)\cdot z$.
\end{proof}
\end{example}

A rich family of examples arising in this way arises by letting $C$ be the small posetal category $C$ obtained from the preordered set $(X,\leq)$.  In the current object-free formulation, we can take $C=\{(x,y)\in X\times X\mid x\leq y\}$, with $(x,y)\circ (u,v)=(x,v)$ if and only if $y=u$ and otherwise it is undefined, with $D((x,y))=(x,x)$ and $R((x,y))=(y,y)$.  Then it is easily checked that $P\subseteq C$ is saturated if and only if, whenever $(x,z)\in P$ with $x\leq y\leq z$, it must be that $(x,y),(y,z)\in P$.  For example, let $X$ be the totally ordered set of integers and define $C=\{(x,y)\in X\times X\mid x\leq y\}$ as above. If we let $P_n=\{(x,y)\in C\mid y-x<n\}$ for some positive integer $n$, $P_n$ is evidently saturated, and so defining $(x,y)\cdot(y,z)=(x,z)$ but only if $z-x<n$ and otherwise undefined makes $(P_n,\cdot,D,R)$ into a partial category which is evidently not a category. For example, $(2,4),(4,6)\in P_3$ and $(2,4)\circ (4,6)=(2,6)$ in $C$ but the result is not in $P_3$, so $(2,4)\cdot (4,6)$ does not exist, even though $R((2,4))=(4,4)=D((4,6))$.

Another kind of example comes from within the path category (free category) $C$ of a directed graph, if we let $C_n$ consist of all paths of length at most $n>0$; clearly $C_n$ is saturated within $C$, but $C_n$ is not a subcategory of $C$.

\subsection{DR-semigroups}

The term ``DR-semigroup" was introduced in \cite{StokesDR15}, but the concept was first defined in \cite{Lawson91}, in the form of so-called reduced $E$-semiabundant semigroups.  These are semigroups with distinguished subset of idempotents $E$ such that the induced generalized Green's relations  $\tilde{\mathcal R}_E$ and $\tilde{\mathcal L}_E$ are such that each $\tilde{\mathcal R}_E$-class and each $\tilde{\mathcal L}_E$-class contains a member of $E$; it is further assumed that for all $e,f\in E$, $e=ef\; \Leftrightarrow\; e=fe$, and this is sufficient (but not necessary) to ensure that the member of $E$ in each $\tilde{\mathcal R}_E$-class and each $\tilde{\mathcal L}_E$-class is unique.  This gives rise to two unary operations as follows: for all $s\in S$, $D(s)$ is the unique $e\in E$ for which $(s,e)\in \tilde{\mathcal R}_E$, and similarly for $R(s)$ in terms of $\tilde{\mathcal L}_E$.  One can then equationally characterise the resulting biunary semigroups as those satisfying the following laws:
\begin{enumerate}[ (DR1)]
\item $D(x)x=x$
\item $xR(x)=x$
\item $R(D(x))=D(x), D(R(x))=R(x)$
\item $D(x)D(xy)=D(xy)D(x)=D(xy)$
\item $R(y)R(xy)=R(xy)R(y)=R(xy)$.
\end{enumerate}
These are the DR-semigroups of \cite{StokesDR15}.  They also appear in the guise of ``reduced $E$-Fountain semigroups" in \cite{Stein24} (as well as in earlier papers in the series in which the congruence conditions were also assumed).   We call the subset $D(S)=\{D(s)\mid s\in S\}$ of the DR-semigroup $S$ the set of {\em projections} of $S$.  Because of the left-right symmetry in their definition, we may dualise established facts to obtain new ones (by reversing product orders and interchanging $D$ and $R$), and often do so in what follows.

The laws 
$$D(xy)=D(xD(y))\mbox{ and }R(xy)=R(R(x)y)$$ 
are referred to as the {\em congruence conditions} and define the subclass of DR-semigroups called {\em DRC-semigroups}.   The congruence conditions ensure that the DR-semigroup $S$ can be made into a category $(S,\circ,D,R)$ if we define $a\circ b=ab$ but only when $R(a)=D(b)$.  However, they are not necessary for this: as discussed in \cite{StokesESN23}, a necessary and sufficient condition is that the following condition (equivalent to two quasiequations) holds:
$$R(x)=D(y) \; \Rightarrow \; D(xy)=D(x)\: \&\: R(xy)=R(y).$$
We call this the {\em cat-semigroup condition}.

We summarise some familiar key properties of DR-semigroups in the following result.

\begin{lem}  \label{DRbits}
Let $S$ be a DR-semigroup with $\leq$ the natural order on $D(S)$ (given by $e\leq f$ if and only if $e=ef=fe$).
\begin{enumerate}
\item For $x\in S$, $D(x)$ is the smallest $e\in D(S)$ under $\leq$ such that $ex=x$, and dually for $R(x)$.
\item For $x,y\in S$, $D(xy)\leq D(xD(y))\leq D(x), R(xy)\leq R(R(x)y)\leq R(y)$.
\end{enumerate}
\end{lem}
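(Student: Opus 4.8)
The plan is to work purely equationally from (DR1)--(DR5), after first recording the standard facts about projections that the computation needs. First I would observe, from (DR3), that $R(D(x))=D(x)$ and $D(R(x))=R(x)$ for all $x$, and then chain these to get $D(D(x))=D(x)$ (apply the second identity with $D(x)$ in place of $x$ and use the first), so that $D(S)=R(S)$ and every projection $e$ satisfies $D(e)=R(e)=e$; idempotency of $e$ then comes from (DR2) applied to $e=D(s)$ together with $R(e)=e$. I would also note in passing that $\leq$ is a genuine partial order on $D(S)$ (reflexivity from idempotency, the rest from the defining equations), so that ``smallest'' in (1) is unambiguous.

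For part (1), the easy half is that $D(x)\in D(S)$ and $D(x)x=x$ by (DR1), so $D(x)$ is one of the projections $e$ with $ex=x$. For minimality I would take any such $e$, apply $D$ to the equation $ex=x$ to get $D(ex)=D(x)$, and then invoke (DR4) in the form $D(e)D(ex)=D(ex)D(e)=D(ex)$; since $D(e)=e$ this reads $eD(x)=D(x)e=D(x)$, i.e.\ $D(x)\leq e$. The statement for $R$ is the left--right dual.

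Part (2) I would then read off from (1). For $D(xD(y))\leq D(x)$: by (DR1) we have $D(x)\cdot xD(y)=xD(y)$, so applying (1) to the element $xD(y)$ gives the inequality. For $D(xy)\leq D(xD(y))$: by (DR1), $D(xD(y))\cdot xD(y)=xD(y)$; right-multiplying by $y$ and using $D(y)y=y$ gives $D(xD(y))\cdot xy=xy$, and applying (1) to $xy$ finishes it. The two inequalities for $R$ follow by duality.

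I do not expect a real obstacle here: the content is routine once (1) is in place, and the only slightly delicate point is making sure the preliminary facts about projections --- especially $D(e)=e$ for $e\in D(S)$ --- are established before (DR4) is used in the proof of (1), since that identity is applied with a projection in its first coordinate. The one genuine idea is recognising that the ``smallest element'' characterisation in (1) is exactly the tool that makes (2) fall out with no further computation.
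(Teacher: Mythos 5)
Your proof is correct. Note that the paper itself gives no proof of this lemma: it is stated as a summary of ``familiar key properties,'' with the first part attributed to the alternative formulation of DR-semigroups in \cite{StokesDR15}, so there is no argument in the text to compare against. Your derivation is the standard one and is sound: the preliminary facts ($D(e)=R(e)=e$ and idempotency for $e\in D(S)$, via (DR3) and (DR2)) are established before (DR4) is invoked with a projection in its first coordinate, the minimality in (1) follows correctly from $D(ex)=D(x)$ together with $eD(ex)=D(ex)e=D(ex)$, and reading (2) off from (1) via $D(x)\cdot xD(y)=xD(y)$ and $D(xD(y))\cdot xy=xy$ is exactly the intended mechanism.
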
  

Indeed, an alternative formulation of DR-semigroups, given in \cite{StokesDR15}, involves the first property above: they are precisely those semigroups having distinguished subset of idempotents $E$ such that there is a smallest $e\in E$ under the natural order such that $es=s$, namely $D(s)$, and dually for $R(s)$ (and then $E=D(S)=R(S)$). 

A further useful pair of laws is the following.

\begin{lem}  \label{lemuse}
Let $S$ be a DR-semigroup.  Then for all $x,y\in S$, $D(xy)=D(D(xy)x)$ and $R(xy)=R(yR(xy))$.
\end{lem}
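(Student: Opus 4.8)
The plan is to prove the first identity $D(xy)=D(D(xy)x)$; the second, $R(xy)=R(yR(xy))$, then follows by the left-right duality of the DR-semigroup axioms noted above. Write $e=D(xy)$ and $f=D(ex)=D(D(xy)x)$. The goal is to show $e=f$, which I will do by establishing $f\le e$ and $e\le f$ in the natural order on $D(S)$ and appealing to antisymmetry; both inequalities will come from the minimality characterisation of $D$ in Lemma \ref{DRbits}(1).

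For $f\le e$: the element $e$ is a projection, hence idempotent (apply (DR4) with $y$ replaced by $R(x)$, using (DR2) to simplify $xR(x)=x$), so $e(ex)=(ee)x=ex$. Thus $e$ is an element of $D(S)$ fixing $ex$ on the left, and Lemma \ref{DRbits}(1) applied to the element $ex$ gives $D(ex)\le e$, that is, $f\le e$. By the definition of the natural order this means $fe=ef=f$.

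For $e\le f$: by (DR1) applied to $ex$ we have $f(ex)=ex$, so $(fe)x=ex$, and since $fe=f$ this gives $fx=ex$. Multiplying on the right by $y$ and using (DR1) for $xy$ (so $e(xy)=xy$), we obtain $f(xy)=(fx)y=(ex)y=e(xy)=xy$. Hence $f$ is an element of $D(S)$ fixing $xy$ on the left, and minimality of $D(xy)=e$ in Lemma \ref{DRbits}(1) yields $e\le f$. Together with $f\le e$ this gives $e=f$, which is the first identity, and the second is its dual.

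The argument is essentially routine, so there is no serious obstacle; the one place to take a little care is the inequality $f\le e$. It is tempting to argue $D(ex)\le D(eD(x))\le D(e)$ via Lemma \ref{DRbits}(2) and then quote $D(e)=e$, but it is cleaner (and sidesteps that separate small fact about projections being fixed by $D$) to use idempotency of $e$ together with Lemma \ref{DRbits}(1) directly, as above. Note that no use is made of the congruence or ample conditions, so the identities hold in every DR-semigroup.
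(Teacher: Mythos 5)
Your proof is correct. Every step checks out: projections are idempotent (your derivation from (DR4) and (DR2) is valid), so $f=D(ex)\le e$ by the minimality in Lemma \ref{DRbits}(1); the relation $fe=f$ then turns (DR1) for $ex$ into $fx=ex$, whence $f(xy)=xy$ and $e\le f$ by minimality again; antisymmetry and duality finish the job.

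The paper gets there by a different, much shorter route: it applies the monotonicity chain of Lemma \ref{DRbits}(2) twice to sandwich, writing $D(xy)=D(D(xy)xy)=D\bigl((D(xy)x)y\bigr)\le D(D(xy)x)\le D(D(xy))=D(xy)$, so all terms are equal. That argument leans on part (2) of Lemma \ref{DRbits} as a black box (plus the small fact $D(e)=e$ for projections, from (DR3)), whereas you work entirely from the minimality characterisation in part (1) and the raw axioms. What your version buys is transparency: the key identity $fx=ex$ makes it visible \emph{why} $f$ absorbs $xy$ on the left, and you avoid invoking the inequality $D(uv)\le D(u)$ as a prepackaged fact. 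What the paper's version buys is brevity, since Lemma \ref{DRbits}(2) already encodes exactly the two inequalities you re-derive by hand. Both proofs use nothing beyond the DR-semigroup axioms, as you correctly note at the end.
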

\begin{proof}
Now $D(xy)=D(D(xy)xy)\leq D(D(xy)x)\leq D(xy)$ from Lemma \ref{DRbits}, so all are equal and so $D(xy)=D(D(xy)x)$.  Dualise for the other claim.
\end{proof}

Let $S$ be a DR-semigroup.  If $ef=fe$ for all $e,f\in D(S)$, then we say that {\em $D(S)$ commutes}, and $D(S)$ is easily seen to be a semilattice in this case.  (If $D(S)$ commutes and $e,f\in D(S)$, then $D(ef)=D(ef)D(e)=D(ef)e=D(fe)e=D(fe)fe=fe=ef$, so $ef=fe\in D(S)$.)  

If $D(S)$ commutes and $S$ is a DRC-semigroup, then $S$ is said to be an {\em Ehresmann semigroup}; these were first defined in \cite{Lawson91}.  Every inverse semigroup is an Ehresmann semigroup if we define $D(x)=xx'$ and $R(x)=x'x$ for all $x\in S$ (where $x'$ is the unique inverse of $x$).  

There are two well-known partial orders on any DR-semigroup $S$, both of which extend the natural order on $D(S)$, defined as follows: 
$$s\leq_r t\ \Leftrightarrow\ D(s)\leq D(t)\: \&\: s=D(s)t,$$
$$s\leq_l t\ \Leftrightarrow\ R(s)\leq R(t)\: \&\: s=tR(s).$$
Equivalently, $s\leq_r t$ if and only if $s=et$ for some $e\in D(S), e\leq D(t)$ (under the natural order on $D(S)$), and dually $s\leq_l t$ if and only if $s=tf$ for some $f\in D(S), f\leq R(t)$.  These relations were defined in \cite{Lawson91}, where they were shown to be partial orders, and they have played a key role in developing ESN-style theorems for Ehresmann and, more generally, DRC-semigroups.  

Let $C$ be a partial category such that $D(C)$ is partially ordered, and let $P(C)$ consist of all subsets of $C$.  (Evidently, $C$ could be a category, or the partial order on $D(C)$ could be equality.)  Because $C$ is a partial semigroup, $P(C)$ can obviously be made into a semigroup by defining, for all $A,B\in P(C)$, 
$$AB=\{a\circ b\mid a\in A,b\in B\}.$$
Also define $D(A)=\{e\in D(C)\mid e\leq D(a), a\in A\}$ for all $A\in P(C)$, and dually for $R(A)$.  

It is shown in Example 1 in \cite{Lawson21} that if $C$ is a small category and the partial order on $D(C)$ is taken to be equality, then $(P(C),\cdot,D,R)$ is an Ehresmann semigroup, generalizing the fact that the set of binary relations on a set gives an Ehresmann semigroup under relational composition, domain and range.  More generally, we have the following.

\begin{pro}
If $C$ is a partial category in which $D(C)$ is partially ordered, then $S=(P(C),\cdot,D,R)$ is a DR-semigroup in which products in $D(S)$ are intersections; hence, $D(S)$ commutes.
\end{pro}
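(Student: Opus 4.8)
The plan is to unwind the definitions, first recording the structure of the ``identities'' of $C$. Write $E := D(C)$. From the derived identities $R(D(x)) = D(x)$ and $D(R(x)) = R(x)$ one gets $D(C) = R(C) = E$, and hence for every $e \in E$ that $D(e) = e = R(e)$; combined with (PC1) this gives $e \circ e = e$. Moreover, for $e, f \in E$ the product $e \circ f$ exists precisely when $e = f$ (by (PC2), its existence forces $R(e) = D(f)$, i.e.\ $e = f$; conversely $e \circ R(e) = e$ and $R(e) = e$), and then $e \circ f = e$. So, restricted to $E$, composition is ``the diagonal''.

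Next I would identify $D(S) = \{D(A) \mid A \in P(C)\}$ with the set of $\leq$-down-closed subsets of $E$ (and $R(S)$ likewise, by duality). On the one hand, $D(A) = \{e \in E \mid e \leq D(a)\text{ for some }a \in A\}$ is visibly down-closed in $E$; on the other, any down-closed $Y \subseteq E$ satisfies $D(Y) = Y$, since $D(e) = e$ for $e \in E$ and $Y$ is down-closed. Consequently, for down-closed $X, Y \subseteq E$, $XY = \{e \circ f \mid e \in X,\, f \in Y,\, e \circ f\text{ exists}\} = \{e \mid e \in X \cap Y\} = X \cap Y$ by the first paragraph. This is exactly the claim that products in $D(S)$ are intersections; commutativity of $D(S)$ is then immediate, settling the ``hence'' clause.

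It remains to verify (DR1)--(DR5). For (DR1), $D(A) \cdot A = A$: each $a \in A$ lies in $D(A) A$ since $D(a) \in D(A)$ and $D(a) \circ a = a$; conversely if $e \in D(A)$, $a \in A$ and $e \circ a$ exists, then $R(e) = D(a)$ by (PC2) and $R(e) = e$ as $e \in E$, so $e = D(a)$ and $e \circ a = a \in A$; (DR2) is dual. For (DR3), $R(D(A)) = D(A)$ because $D(A)$ is down-closed in $E$ and, by the same argument as for $D$, $R$ fixes such sets; $D(R(A)) = R(A)$ is dual. For (DR4), first observe $D(AB) \subseteq D(A)$: every element of $AB$ has the form $a \circ b$ with $a \circ b$ existing, and $D(a \circ b) = D(a)$, so any $e \leq D(a \circ b)$ also satisfies $e \leq D(a)$; then, since products in $D(S)$ are intersections, $D(A) D(AB) = D(A) \cap D(AB) = D(AB) = D(AB) \cap D(A) = D(AB) D(A)$, which is (DR4). (DR5) is dual, using $R(a \circ b) = R(b)$. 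Associativity of $\cdot$ on $P(C)$ holds because $C$ is a partial semigroup (as already noted before the statement), so $S$ is a DR-semigroup.

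I do not expect any real obstacle here: every step is a routine unwinding of the definitions together with the elementary facts about $C$. The one point worth a little care is the down-set description of $D(S)$, which rests on the observation that composition on $E$ is the diagonal; once that is in place, the fact that products of projections are intersections — and with it (DR4), (DR5) and the commutativity of $D(S)$ — drops out.
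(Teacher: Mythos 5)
Your proposal is correct and follows essentially the same route as the paper's proof: both verify $A = D(A)A = AR(A)$ directly, show that products of projections are intersections via the observation that $e\circ f$ exists for identities only when $e=f$, and deduce (DR4)--(DR5) from $D(AB)\subseteq D(A)$ together with the intersection description. Your explicit identification of $D(S)$ with the down-closed subsets of $D(C)$ is a slightly more systematic packaging of what the paper leaves implicit, but it is not a different argument.
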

\begin{proof}
Certainly $(P(C),\cdot)$ is a semigroup.

Next, for $A\in P(C)$, if $a\in A$ then $a=D(a)\circ a\in D(A)A$, so $A\subseteq D(A)A$.  Conversely, if $e\circ a\in D(A)A$ (so $e\in D(A),a\in A$), then $e=R(e)=D(a)$ and so $e\circ a=D(a)\circ a=a\in A$, and so in fact $A=D(A)A$.  Dually, $A=AR(A)$.
Evidently $D(R(A))=R(A), R(D(A))=D(A)$ for all $A\in P(C)$.  

Pick $A,B\in P(C)$.  If $e\circ f\in D(A)D(B)$ (where $e\in D(A), f\in D(B)$), then $e=f$, and so $e\circ f=e=f\in D(A)\cap D(B)$.  Conversely, if $e\in D(A)\cap D(B)$ then $e=e\circ e\in D(A)D(B)$.  So $D(A)\cap D(B)=D(A)D(B)$, which thus equals $D(B)D(A)$ also.  Hence $D(A)\subseteq D(B)\;\Leftrightarrow\; D(A)=D(A)D(B)=D(B)D(A)$.  

Thus, for $e\in D(AB)$, $e\leq D(a\circ b)=D(a)$ for some $a\in A,b\in B$, so $e\in D(A)$, and so $D(AB)\subseteq D(A)$, so $D(AB)D(A)=D(A)D(AB)=D(AB)$.  Dually for $R$.
\end{proof}

Of course, the partial order on $D(C)$ can be taken to be equality.  In that case, $D,R$ are defined exactly as for the category case given in Example 3 in \cite{Lawson21}.

\section{Introducing the ample conditions for DR-semigroups}

Throughout this section, $S$ is a fixed DR-semigroup.

Following the work of Fountain and others, including Stein in \cite{Stein22}, we say that $S$ satisfies the {\em ample conditions} if it obeys the following two laws: for all $x\in S$ and $e\in D(S)$,
\begin{itemize}
\item $xe=D(xe)x$ (left ample condition), and 
\item $ex=xR(ex)$ (right ample condition).
\end{itemize}
A (left) restriction semigroup is nothing but a DRC-semigroup satisfying the (left) ample condition(s).  (We do not use the terminology ``DR-ample semigroup", or ``ample DR-semigroup", since ample semigroups satisfy the congruence conditions when viewed as biunary semigroups.)

We recall Stein's generalized ample conditions as used in his sequence of papers \cite{Stein22}, \cite{Stein22a} and \cite{Stein24}; these are the laws
\begin{itemize}
\item $D(R(D(xy)x)y) = R(D(xy)x)$ (generalized left ample condition), and 
\item $R(x D(yR(xy))) = D(yR(xy))$ (generalized right ample condition).
\end{itemize}

The first and third parts of the next result were proved within DRC-semigroups in \cite{Stein22} (see Lemmas 3.8 and 3.9 as well as Proposition 3.14 and its dual).

\begin{pro}   \label{several}
Suppose $S$ satisfies the ample conditions. 
\begin{enumerate}
\item $S$ satisfies the generalized ample conditions.
\item The partial orders $\leq_r$ and $\leq_l$ coincide; we call this partial order the {\em standard order} on $S$, and denote it by ``$\leq$" (as it coincides with the natural order on $D(S)$).  
\item $D(S)$ commutes.
\item Multiplication in $S$ is monotonic: 
$s_1\leq t_1\: \&\: s_2\leq t_2\; \Rightarrow\; s_1s_2\leq t_1t_2.$
\end{enumerate}
\end{pro}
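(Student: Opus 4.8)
The plan is to establish the four parts in the order (3), (2), (1), (4): parts (3) and (2) are short deductions from the ample laws and Lemmas \ref{DRbits} and \ref{lemuse}, part (1) is the substantive one (it uses only the left ample law, plus its dual), and part (4) falls out of (2) and (3). Throughout I would exploit the left--right duality, proving only one statement from each dual pair.

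For (3), fix $e,f\in D(S)$. The left ample law gives $ef=D(ef)e$, and Lemma \ref{DRbits}(2) gives $D(ef)\le D(eD(f))\le D(e)=e$, so $D(ef)e=D(ef)$ and hence $ef=D(ef)\in D(S)$ with $ef\le e$. Dually, the right ample law gives $fe=eR(fe)$ with $R(fe)\le e$, so $fe=R(fe)\in D(S)$ with $fe\le e$. Since $ef$ and $fe$ both lie below $e$, one computes $efe=(ef)e=ef$ and also $efe=e(fe)=fe$, so $ef=fe$; as noted before the statement, $D(S)$ is then a semilattice. For (2), suppose $s\le_r t$, so $s=D(s)t$ with $D(s)\le D(t)$. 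The right ample law gives $s=D(s)t=tR(D(s)t)=tR(s)$, while Lemma \ref{DRbits}(2) gives $R(s)=R(D(s)t)\le R(t)$; hence $s\le_l t$, and the converse is dual. The definitions then show that the common order restricts to the natural order on $D(S)$.

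The crux is (1). Put $a=D(xy)x$ and $q=R(a)=R(D(xy)x)$, so the generalized left ample law reads exactly $D(qy)=q$. Two opening observations: $ay=D(xy)\cdot xy=xy$ by (DR1), and $D(a)=D(D(xy)x)=D(xy)$ by Lemma \ref{lemuse}. One inequality is immediate: $D(qy)\le D(qD(y))\le D(q)=q$ by Lemma \ref{DRbits}(2). For the reverse inequality I would prove the sharper statement $aD(qy)=a$; then $D(qy)\in D(S)$ is a projection fixing $a$ on the right, so $q=R(a)\le D(qy)$ by Lemma \ref{DRbits}(1), and with $D(qy)\le q$ this yields $D(qy)=q$. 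To prove $aD(qy)=a$: since $a(qy)=(aR(a))y=ay=xy$ (using (DR2) then (DR1)), we have $D(a(qy))=D(xy)=D(a)$, so Lemma \ref{DRbits}(2) sandwiches $D(a)=D(a(qy))\le D(aD(qy))\le D(a)$, forcing $D(aD(qy))=D(a)$; the left ample law then gives $aD(qy)=D(aD(qy))\,a=D(a)\,a=a$. The generalized right ample law is dual. I expect this to be where the real work lies: one must see that the quantity to prove is $aD(qy)=a$ rather than $D(qy)=q$ head-on, and notice that the identity $ay=xy$ (together with $aR(a)=a$) is precisely what feeds the sandwich.

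Finally, for (4), given $s_1\le t_1$ and $s_2\le t_2$, I would use part (2) to write $s_1=D(s_1)t_1$ (from $\le_r$) and $s_2=t_2R(s_2)$ (from $\le_l$), so that
$$s_1s_2=D(s_1)(t_1t_2)R(s_2)=D(s_1)\,D\big((t_1t_2)R(s_2)\big)\,(t_1t_2)$$
by the left ample law. By part (3) the product $g:=D(s_1)D((t_1t_2)R(s_2))$ is again a projection, and $g\le D((t_1t_2)R(s_2))\le D(t_1t_2)$ by Lemma \ref{DRbits}(2); thus $s_1s_2=g(t_1t_2)$ with $g\in D(S)$ and $g\le D(t_1t_2)$, which is exactly $s_1s_2\le_r t_1t_2$, i.e.\ $s_1s_2\le t_1t_2$.
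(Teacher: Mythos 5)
Your proof is correct; I checked each step against Lemmas \ref{DRbits} and \ref{lemuse} and the DR-semigroup axioms, and parts (2) and (3) follow essentially the paper's own route. The genuine differences are in parts (1) and (4). For part (1), the paper proves the generalized \emph{right} ample condition by first establishing the auxiliary identity $R(xD(y))yR(xy)=yR(xy)$ for all $x,y$ (via the right ample law) and then substituting $yR(xy)$ for $y$, finishing with the sandwich $D(yR(xy))\leq R(xD(yR(xy)))\leq R(D(yR(xy)))=D(yR(xy))$. You instead attack the generalized \emph{left} ample condition directly: with $a=D(xy)x$ and $q=R(a)$ you reduce the claim to $aD(qy)=a$, obtain this from the sandwich $D(a)=D(a(qy))\le D(aD(qy))\le D(a)$ together with the left ample law, and then invoke the minimality characterisation of $R(a)$ from Lemma \ref{DRbits}(1) to get $q\le D(qy)$. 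Both arguments rest on the same two lemmas, but yours avoids the substitution trick and makes explicit where the minimality of $R$ enters; the two are dual-symmetric choices of which generalized condition to prove first. For part (4), the paper simply cites Proposition 3.16 of \cite{Lawson91}, whereas your computation $s_1s_2=D(s_1)D\bigl((t_1t_2)R(s_2)\bigr)(t_1t_2)$ with $g=D(s_1)D\bigl((t_1t_2)R(s_2)\bigr)\le D(t_1t_2)$ gives a self-contained proof using only parts (2) and (3) and the characterisation of $\leq_r$ as ``$s=et$ for some $e\in D(S)$ with $e\leq D(t)$''; that is a modest but real gain in self-containment.
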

\begin{proof}
Now for all $x,y\in S$, $R(xy)=R((xD(y))y)\leq R(R(xD(y))y)$ by (2) in Lemma \ref{DRbits}, so $R(xy)=R(R(xD(y))y)R(xy)$.  Hence,
$$R(xD(y))yR(xy)=yR(R(xD(y))y)R(xy)=yR(xy).$$  
As this is true for all $x,y$, we may replace $y$ by $yR(xy)$ and obtain 
$$R(xD(yR(xy)))yR(xy)R(xy)=yR(xy)R(xy),$$ 
or $R(xD(yR(xy)))yR(xy)=yR(xy)$, and so by (2) in Lemma \ref{DRbits} and Law (DR3) for DR-semigroups,
$$D(yR(xy))\leq R(xD(yR(xy)))\leq R(D(yR(xy)))=D(yR(xy)),$$ and so $D(yR(xy))=R(xD(yR(xy)))$, which is the generalized right ample condition.  Dualize to obtain the other one.

If $x\leq_r y$ for some $x,y\in S$, then $x=D(x)y$ and $D(x)\leq D(y)$, so $x=yR(D(x)y)$, and  $R(D(x)y)\leq R(y)$ by (2) in Lemma \ref{DRbits}, so $x\leq_l y$.  The converse follows dually.

Next, for $e,f\in D(S)$, by (DR4) and the ample conditions, $D(ef)=D(ef)D(e)=D(ef)e=eD(f)=ef$, and so $ef=efe$.  Dually, using $R$ we obtain $fe=efe$, and so $ef=fe$.

Monotonicity follows from Proposition $3.16$ in \cite{Lawson91}, which states that if $D(S)$ commutes and $x\leq_r y$ then $xz\leq_r yz$, and dually for $\leq_l$, so if the partial orders are equal then both hold.
\end{proof}

In fact two of the above properties together characterise the ample conditions.

\begin{pro}
The ample conditions hold on $S$ if and only if $\leq_r,\leq_l$ coincide and $D(S)$ commutes.
\end{pro}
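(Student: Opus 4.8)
The forward direction is already in hand: if the ample conditions hold then parts (2) and (3) of Proposition~\ref{several} say precisely that $\leq_r$ and $\leq_l$ coincide and that $D(S)$ commutes. So the content lies in the converse, and since DR-semigroups carry a left--right duality it is enough to derive the left ample condition $xe = D(xe)x$ (for $x \in S$ and $e \in D(S)$) from the two hypotheses that $\leq_r = \leq_l$ and that $D(S)$ commutes.

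The strategy is: first prove $xe \leq_l x$ using only the DR-laws together with the commutativity of $D(S)$, and then invoke $\leq_l = \leq_r$ to rewrite this as $xe \leq_r x$, which by definition unpacks to $D(xe) \leq D(x)$ together with $xe = D(xe)x$ --- exactly the identity wanted. Recall that $xe \leq_l x$ means $R(xe) \leq R(x)$ and $xe = xR(xe)$. The second clause is easy: Lemma~\ref{DRbits}(2) gives $R(xe) \leq R(e) = e$, so $R(xe) = e\,R(xe)$, whence $xR(xe) = xe\,R(xe) = xe$ by (DR2). For the first clause, Lemma~\ref{DRbits}(2) gives $R(xe) \leq R(R(x)e)$; this is where commutativity of $D(S)$ enters, since (as already recorded in the excerpt) a product of commuting projections is again a projection, so $R(x)e \in D(S)$ and therefore $R(R(x)e) = R(x)e$, and a one-line computation using idempotency and commutativity gives $R(x)e \leq R(x)$; transitivity of $\leq$ then yields $R(xe) \leq R(x)$. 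This establishes $xe \leq_l x$, hence $xe \leq_r x$, hence the left ample condition. Dualizing (using that $D$ fixes projections in place of the $R$-version of that fact) gives the right ample condition $ex = xR(ex)$.

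The one step needing genuine care --- and the only place the two hypotheses are really combined --- is the inequality $R(xe) \leq R(x)$: without commutativity of $D(S)$ one only gets $R(xe) \leq R(x)e$, with nothing relating $R(x)e$ to $R(x)$, so $xe \leq_l x$ may fail and the transfer to $\leq_r$ collapses. Everything else is routine manipulation of the DR-laws.
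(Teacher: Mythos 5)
Your proof is correct and follows essentially the same route as the paper: establish $xe\leq_l x$ (using commutativity of $D(S)$), then use $\leq_l\;=\;\leq_r$ to unpack $xe\leq_r x$ into $xe=D(xe)x$, and dualize. The only difference is that the paper outsources the step $xe\leq_l x$ to Proposition 3.13 of Lawson's paper, whereas you verify it directly from Lemma~\ref{DRbits} and the fact that a product of commuting projections is again a projection, correctly isolating the inequality $R(xe)\leq R(x)$ as the one place where commutativity is genuinely needed.
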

\begin{proof}
The forward direction is covered by Proposition \ref{several}.  Conversely, suppose $\leq_r$ and $\leq_l$ coincide and $D(S)$ commutes.  Then by Propostion 3.13 in \cite{Lawson91}, for all $e\in D(S)$ and $s\in S$, 
$se\leq_l s$, and so $se\leq_r s$, so $se=D(se)s$.  The other ample condition follows dually.
\end{proof} 

Following \cite{Lawson21}, we say $s\in S$ is {\em bideterministic} if for all $e\in D(S)$, $se=D(se)s$ and $es=sR(es)$.  Denote by $B(S)$ the bideterministic elements of $S$.  Every element of $S$ is bideterministic if and only if $S$ satisfies the ample conditions.  More generally, we have the following.

\begin{pro}
The subset $B(S)$ is a subsemigroup of $S$.
\end{pro}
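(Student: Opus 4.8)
The plan is to verify directly that $B(S)$ is closed under multiplication. Fix $s,t\in B(S)$ and $e\in D(S)$; we must establish the two identities $(st)e=D((st)e)(st)$ and $e(st)=(st)R(e(st))$. These are dual to one another, so I would give the argument for the first and simply indicate the dual computation for the second.

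For the first identity, peel off $e$ from the right using bideterminism of $t$ and then of $s$: since $te=D(te)t$ with $D(te)\in D(S)$, applying the first bideterminism condition for $s$ to the projection $D(te)$ gives
\[
ste \;=\; s\,D(te)\,t \;=\; D\!\big(sD(te)\big)\,st .
\]
Writing $f=D(sD(te))\in D(S)$, we have $ste=f\cdot st$. One cannot in general conclude $f=D(ste)$ (that would amount to a congruence-type identity, which is not assumed here), but one does not need to: applying $D$ to $ste=f\cdot st$ and using Lemma~\ref{DRbits}(2) together with $D(f)=f$ yields $D(ste)=D(f\cdot st)\le f$ in the natural order on $D(S)$, hence $D(ste)f=D(ste)$. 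Therefore
\[
D(ste)\cdot st \;=\; D(ste)f\cdot st \;=\; D(ste)\cdot ste \;=\; ste
\]
by (DR1), which is precisely $(st)e=D((st)e)(st)$.

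For the second identity I would run the dual computation: bideterminism of $s$ and then of $t$ gives $est=s\,R(es)\,t=st\cdot R(R(es)t)$; setting $g=R(R(es)t)\in D(S)$ we get $R(est)\le R(g)=g$ by the dual of Lemma~\ref{DRbits}(2), so $gR(est)=R(est)$ and hence $st\cdot R(est)=st\cdot g\cdot R(est)=est\cdot R(est)=est$ by (DR2). As $e\in D(S)$ was arbitrary, $st\in B(S)$; since the multiplication of $S$ is everywhere defined, $B(S)$ is a subsemigroup.

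I do not anticipate any real obstacle. The only step that is not pure bookkeeping is the observation that, after writing $ste$ in the form $f\cdot st$ with $f$ a projection, one should invoke Lemma~\ref{DRbits} to get $D(ste)\le f$ and then use the natural order to absorb the ``extra'' projection; everything else relies only on (DR1)--(DR3) and the standard identity $D(f)=f$ for $f\in D(S)$ (which itself follows from Lemma~\ref{DRbits}).
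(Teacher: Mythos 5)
Your proof is correct and follows essentially the same route as the paper's: both peel off $e$ via bideterminism of $t$ and then of $s$ to write $ste=D(sD(te))\,st$, and then absorb the extra projection using Lemma~\ref{DRbits}(2) and the identity (DR1). The only cosmetic difference is that you obtain $D(ste)\le f$ by applying Lemma~\ref{DRbits}(2) to the rewritten product $f\cdot(st)$, whereas the paper applies it to the original factorisation $a\cdot(be)$; both yield the same inequality.
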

\begin{proof}
Suppose $a,b\in B(S)$, with $e\in D(S)$.  Then $abe=aD(be)b=D(aD(be))ab$, so 
$abe=D(abe)abe=D(abe)D(aD(be))ab=D(abe)ab$ by (2) in Lemma \ref{DRbits}.  Dualising gives $eab=abR(eab)$.  Hence $ab\in B(S)$.
\end{proof} 

Obviously, if $B(S)$ is closed under $D,R$, it is a DR-subsemigroup satisfying the ample conditions.  One time this happens is when $D(S)\subseteq B(S)$.

\begin{pro}  \label{BDS}
The set of projections $D(S)$ commutes if and only if $D(S)\subseteq B(S)$, in which case $B(S)$ is a DR-subsemigroup of $S$ which satisfies the ample conditions.
\end{pro}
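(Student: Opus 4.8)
The plan is to prove the two implications of the equivalence by direct computation in $D(S)$, and then to read off the ``in which case'' clause from the remark immediately preceding the proposition. Throughout I will freely use the standard facts that every projection is idempotent and satisfies $D(e)=R(e)=e$ (easy consequences of the DR-semigroup laws), together with Lemma~\ref{DRbits}(2), which in particular gives $D(ef)\le D(e)$ and $R(ef)\le R(f)$ for all $e,f\in D(S)$, and the description of the natural order on $D(S)$ from Lemma~\ref{DRbits}: $g\le h$ iff $g=gh=hg$.

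For the forward implication, suppose $D(S)$ commutes. Then, as recorded in the preliminaries, $D(S)$ is a semilattice under the natural order, so for $e,f\in D(S)$ the product $p=ef=fe$ is a projection with $p=pf=fp$ and $D(p)=R(p)=p$. Hence, given any $f\in D(S)$ and any $e\in D(S)$, writing $p=ef=fe$ we get $D(fe)f=pf=p=fe$ and $fR(ef)=fp=p=ef$, which are precisely the two equations witnessing $f\in B(S)$. Therefore $D(S)\subseteq B(S)$.

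For the converse, suppose $D(S)\subseteq B(S)$ and fix $e,f\in D(S)$. Since $f$ is bideterministic, specialising its defining conditions to the projection $e$ gives $fe=D(fe)f$ and $ef=fR(ef)$. By Lemma~\ref{DRbits}(2) we have $D(fe)\le D(f)=f$ and $R(ef)\le R(f)=f$, so, by the form of the natural order, $D(fe)f=D(fe)$ and $fR(ef)=R(ef)$; substituting these back yields $fe=D(fe)$ and $ef=R(ef)$, so that both $fe$ and $ef$ are projections. Consequently $D(fe)=fe$ and $R(ef)=ef$, and therefore $fe=D(fe)f=fef$ and $ef=fR(ef)=fef$, whence $ef=fe$. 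As $e,f$ were arbitrary, $D(S)$ commutes.

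Finally, when these equivalent conditions hold, each $s\in B(S)$ satisfies $D(s),R(s)\in D(S)\subseteq B(S)$, so $B(S)$ is closed under $D$ and $R$; since $B(S)$ is already known to be a subsemigroup, the last assertion is exactly the remark preceding the proposition. The one step that calls for an idea rather than routine bookkeeping is the converse: recognising that the single equation $fe=D(fe)f$, together with Lemma~\ref{DRbits}(2) and the natural order, already forces $fe$ to be a projection --- after which commutativity falls out because $ef$ and $fe$ both collapse to $fef$.
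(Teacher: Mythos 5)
Your proof is correct and follows essentially the same route as the paper: the direction ``$D(S)$ commutes $\Rightarrow D(S)\subseteq B(S)$'' via the semilattice structure of $D(S)$, the converse by the same projection computation the paper invokes from the third part of Proposition~\ref{several} (showing $ef$ and $fe$ are projections equal to $fef$), and the final clause from the remark preceding the proposition together with the fact that $B(S)$ is a subsemigroup. The only difference is that you spell out inline what the paper delegates to a citation and a ``very similar argument'' remark.
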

\begin{proof}
Suppose $D(S)\subseteq B(S)$.  The argument that $D(S)$ commutes is very similar to the proof given in the third part of Proposition \ref{several}.

Conversely, suppose $D(S)$ commutes.  Then $S$ is a left C-semigroup in the sense of \cite{JackSto01} and so $D(S)$ forms a semilattice by Proposition 1.2 there, so for all $e,f\in D(S)$, $D(ef)=ef$ and so $ef=efe=D(ef)e$, and dually, $ef=fR(ef)$, so $e\in B(S)$.
\end{proof}

Small examples show that $B(S)$ being closed under $D$ and $R$ is not equivalent to $D(S)$ commuting.

\section{Examples of DR-semigroups satisfying the ample conditions}

Aside from restriction semigroups (hence, inverse semigroups), there are other quite natural examples of DR-semigroups satisfying the ample conditions.  We consider two such classes here.

\subsection{Example: closure operators on sets}

Let $X$ be a non-empty set equipped with a closure operator $C$ on its subsets.  When equipped with intersection $\cap$, the power set $2^X$ of $X$ is a semigroup, which is a DR-semigroup in which $D(T)=R(T)=C(T)$ for all $T\subseteq X$.  Moreover, it trivially satisfies the ample conditions, since $(2^X,\cap)$ is a semilattice, but generally does not satisfy the congruence conditions.

\subsection{Example: strongly order-preserving partial functions on a quasiordered set}

Throughout this subsection, suppose $(X,\leq)$ is a quasiordered set.

For any $T\subseteq X$, we define its closure $C(T)$ to be the down-set generated by $T$, that is, the set $\{y\in X\mid y\leq x\mbox{ for some } x\in T\}$.  It is well-known that $C$ is an additive closure operator.


We say a partial function $f$ on $X$ is {\em strongly order-preserving} if, for all $x,y\in \mbox{dom}(f)$, $x\leq y\; \Leftrightarrow\; f(x)\leq f(y)$. Define $P_{SO}(X)$ to be the subset of all strongly order-preserving partial functions on $X$.

It is easy to see that every $f\in P_{SO}(X)$ is injective if the quasiorder $\leq$ is a partial order.  In general, $P_{SO}(X)$ is closed under composition and so forms a semigroup $(P_{SO}(X),\cdot)$ under that operation.  

Define domain and range operations on $P_{SO}(X)$ as follows: for $f\in P_{SO}(X)$, $D(f)$ is the restriction of the identity map to $C(\mbox{dom}(f))$, the down-set generated by $\mbox{dom}(f)$.  Define $R(f)$ dually using $\mbox{im}(f)$.  It follows from Lemma 12.1 in \cite{JackSto13} that $(P_{SO}(X),\cdot,D,R)$ is a DR-semigroup in which $D(P_{SO}(X))$ commutes, which moreover is {\em translucent} in the sense of that reference, meaning that $se=D(se)s$ for all $s\in P_{SO}(X)$ and $e\in D(P_{SO}(X))$; this is nothing but the left ample condition.  In fact we have the following.

\begin{pro}
If $(X,\leq)$ is a quasiordered set, then $P_{SO}(X)$ satisfies the ample conditions.
\end{pro}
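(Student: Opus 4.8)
Since the text has already established (via the translucency of $P_{SO}(X)$, Lemma 12.1 of \cite{JackSto13}) that $P_{SO}(X)$ satisfies the left ample condition $se=D(se)s$, the only thing left to prove is the right ample condition $ex=xR(ex)$ for $x\in P_{SO}(X)$ and $e\in D(P_{SO}(X))$. My plan is to verify it by a direct computation with partial functions, composing left to right so that $D(f)f=f$ (i.e.\ $fg$ sends $a$ to $g(f(a))$). The first step is to pin down the projections: since $e=D(f)$ for some $f\in P_{SO}(X)$ and $D(f)$ is the identity restricted to $C(\dom(f))$, every $e\in D(P_{SO}(X))$ has the form $e=\mathrm{id}_Y$ for a down-set $Y$ of $(X,\leq)$. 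With $e=\mathrm{id}_Y$, the product $ex$ is then simply the restriction $x|_{Y\cap\dom(x)}$.

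Next I would compute the right-hand side: $\im(ex)=x(Y\cap\dom(x))$, so $R(ex)=\mathrm{id}_Z$ where $Z=C(x(Y\cap\dom(x)))$ is the down-set it generates, and hence $xR(ex)$ is the restriction of $x$ to $\{a\in\dom(x)\mid x(a)\in Z\}$. Since $ex$ and $xR(ex)$ are both restrictions of $x$, they are equal precisely when their domains agree, so the right ample condition reduces to the set identity $\{a\in\dom(x)\mid x(a)\in Z\}=Y\cap\dom(x)$. The inclusion $\supseteq$ is trivial, as $a\in Y\cap\dom(x)$ forces $x(a)\in x(Y\cap\dom(x))\subseteq Z$.

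The reverse inclusion is the one substantive point, and it is exactly here that strong order-preservation is used: if $a\in\dom(x)$ with $x(a)\in Z$, then $x(a)\leq x(b)$ for some $b\in Y\cap\dom(x)$, and since $a,b\in\dom(x)$ the backward half of the biconditional defining $P_{SO}(X)$ gives $a\leq b$, whence $a\in Y$ because $Y$ is a down-set; thus $a\in Y\cap\dom(x)$. (Ordinary order-preserving partial functions need not satisfy the ample conditions, so this backward implication is genuinely needed.) I do not anticipate any real obstacle beyond keeping the composition convention and the description of the projections straight; as an alternative one could instead invoke the characterisation proved above — the ample conditions are equivalent to $\leq_r=\leq_l$ together with $D(S)$ commuting — and, knowing already that $D(P_{SO}(X))$ commutes, reduce to checking $\leq_r=\leq_l$, but that verification is no shorter than the direct one just sketched.
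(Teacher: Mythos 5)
Your proof is correct and follows essentially the same route as the paper's: reduce to the right ample condition via the already-cited translucency, observe that $ex$ and $xR(ex)$ are both restrictions of $x$ so it suffices to compare domains, and use strong order-preservation ($x(a)\leq x(b)\Rightarrow a\leq b$) together with $\dom(e)$ being a down-set for the nontrivial inclusion. The only difference is presentational — you work with domains of restrictions where the paper phrases the same argument as an inclusion of partial functions viewed as sets of pairs.
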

\begin{proof}
It remains to check that for all $s\in P_{SO}(X)$ and $e\in D(P_{SO}(X))$, we have $es=sR(es)$.  Recall the inclusion order on partial functions: $f\subseteq g$ if and only if $(x,y)\in f\;\Rightarrow\; (x,y)\in g$.  

Now, $es=esR(es)\subseteq sR(es)$ since $esR(es)$ is a domain restriction of $sR(es)$.  Conversely, if $(x,xs)\in sR(es)$, then $(xs,xs)\in R(es)$, and so $xs\leq y$ for some $y\in \mbox{im}(es)$, so $y=x_1s$ where $x_1\in \mbox{dom}(es)=\mbox{dom}(e)\cap \mbox{dom}(s)$, so $xs\leq x_1s$ and so $x\leq x_1$ since $s\in P_{SO}(X)$, and so $x\in \mbox{dom}(e)$ because it is a down-set, so $x\in \mbox{dom}(e)\cap \mbox{dom}(s)=\mbox{dom}(es)$.  Hence $\mbox{dom}(sR(es))\subseteq \mbox{dom}(es)$, and so $sR(es)\subseteq es$ since both are partial functions contained in $s$.  Hence, $sR(es)= es$.
\end{proof}

Such examples need not satisfy the congruence conditions since they contain a copy of the power set example given earlier: the power set of $X$ arises as the restrictions of the identity to subsets of $X$ and then $D,R$ agree with $C$ as defined above on this copy of the power set of $X$.

\subsection{Examples obtained from partial categories, including injective partial functions}  

If $C$ is a partial category in which $D(C)$ is partially ordered, then by Proposition \ref{BDS}, the set of bideterministic elements of $P(C)$, $B(C)=B(P(C))$, forms a DR-subsemigroup satisfying the ample conditions and containing $D(P(C))$ (the down-sets in $D(C)$).  There is interest in exactly what the bideterministic elements are in this case.

\begin{pro}  \label{bidet}
Let $C$ be a partial category in which $D(C)$ is partially ordered.  Then $A\in B(C)$ if and only if, for all $a,b\in A$, $D(a)\leq D(b)$ if and only if $R(a)\leq R(b)$.  
\end{pro}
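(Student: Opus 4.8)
The plan is to first make everything concrete inside $P(C)$. The elements of $D(P(C))$ are precisely the down-closed subsets of $D(C)$: one inclusion is immediate from the definition of $D$ on $P(C)$, and conversely a down-closed $F\subseteq D(C)$ equals $D(F)$ since each $e\in D(C)$ satisfies $D(e)=e$; in particular, for $b\in C$ the principal down-sets $D(\{b\})=\{f\in D(C)\mid f\leq D(b)\}$ and $R(\{b\})=\{f\in D(C)\mid f\leq R(b)\}$ (the latter lying in $D(P(C))$ by (DR3)) are legitimate test elements. The crucial computation is that for $A\subseteq C$ and a down-closed $E\subseteq D(C)$, a product $a\circ e$ with $a\in A$, $e\in E$ exists only when $e=R(a)$, in which case it equals $a$; hence
$$AE=\{a\in A\mid R(a)\in E\},\qquad EA=\{a\in A\mid D(a)\in E\},$$
and therefore, applying the same formula with $D(AE)$ in place of $E$,
$$D(AE)\cdot A=\{c\in A\mid D(c)\leq D(a)\ \text{for some}\ a\in A\ \text{with}\ R(a)\in E\},$$
with the dual description of $A\cdot R(EA)$. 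Once these formulas are recorded, both implications are one-line inclusion arguments.

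For the ``if'' direction, assume the stated condition and fix $E\in D(P(C))$; I claim $AE=D(AE)A$. The inclusion $AE\subseteq D(AE)A$ is immediate from the formulas above (take $a=c$). Conversely, if $c\in D(AE)A$ then $D(c)\leq D(a)$ for some $a\in A$ with $R(a)\in E$, so by hypothesis $R(c)\leq R(a)$, and since $E$ is down-closed, $R(c)\in E$, i.e.\ $c\in AE$. Dually $EA=A\,R(EA)$, using the reverse implication of the hypothesis, so $A\in B(C)$.

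For the ``only if'' direction, assume $A\in B(C)$ and take $a,b\in A$. To obtain $D(a)\leq D(b)\Rightarrow R(a)\leq R(b)$, apply the identity $AE=D(AE)A$ with the single test element $E:=R(\{b\})=\{f\in D(C)\mid f\leq R(b)\}$: then $b\in AE$, so the assumption $D(a)\leq D(b)$ forces $a\in D(AE)A=AE$, whence $R(a)\leq R(b)$. The reverse implication follows symmetrically from $EA=A\,R(EA)$ applied with $E:=D(\{b\})$ (equivalently, by appealing to the left--right duality, under which both $B(S)$ and the asserted condition are self-dual).

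The only step needing genuine care — the main obstacle — is establishing the concrete descriptions of multiplication, $D$ and $R$ in $P(C)$, and in particular confirming that the principal down-sets $\{f\mid f\leq R(b)\}$ and $\{f\mid f\leq D(b)\}$ are members of $D(P(C))$, so that they may be used as test elements $e$ in the defining identities $se=D(se)s$ and $es=sR(es)$ for bideterminism. With that in hand, the choice of exactly one well-chosen test element in the ``only if'' direction, and the exploitation of down-closure of $E$ in the ``if'' direction, make both directions routine.
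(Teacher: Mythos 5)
Your proof is correct and follows essentially the same route as the paper's: the forward direction tests bideterminism against the principal down-sets of $R(b)$ and $D(b)$, and the backward direction verifies $AE=D(AE)A$ and its dual for arbitrary down-closed $E$ using the hypothesis together with down-closure of $E$. Your preliminary explicit formulas for $AE$, $EA$ and $D(AE)\cdot A$ are a tidier packaging of the same element-chasing that the paper carries out inline.
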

\begin{proof}
Suppose $A\in B(C)$.  Pick $a,b\in A$.  Suppose $D(a)\leq D(b)$.  Now let $E=\{e\in D(C)\mid e\leq R(b)\}$.  Then $E=D(E)$, and so because $A\in B(C)$, $AE=D(AE)A$.  Hence, as $b=b\circ R(b)\in AE$, we have $b\in D(AE)A$, so $b=e\circ c$ for some $e\in D(AE), c\in A$, meaning that $e=D(c)$, so $b=D(c)\circ c=c$, and so $e=D(c)=D(b)$.  Since $D(a)\leq D(b)$, we have $D(a)\in D(AE)$ and therefore $a=D(a)\circ a\in D(AE)A=AE$. Hence, $a=a'\circ f$ for some $a'\in A$ and $f\in E$, and so $a=a', f=R(a)\in E$ and so by definition of $E$, $R(a)\leq R(b)$.  Dualising gives the converse.

Now suppose $A$ is such that, for all $a,b\in A$, $D(a)\leq D(b)$ if and only if $R(a)\leq R(b)$.  Now for all $E\in D(P(C))$, $AE=D(AE)AE\subseteq D(AE)A$; we show the opposite inclusion.  Suppose $x\in D(AE)A$.  Then $x=e\circ a$ for some $a\in A$ and $D(a)=e\leq D(a'\circ f)=D(a')$ for some $a'\in A$ and $R(a')=f\in E$.  But since $D(a)\leq D(a')$, we have $R(a)\leq R(a')$, so $R(a)\in E$ as $E$ is a down-set.  Hence, $x=e\circ a=a=a\circ R(a)\in AE$.  So $AE=D(AE)A$.  Dualising gives $EA=A\cdot R(EA)$.
\end{proof}

Boolean Ehresmann monoids are defined in \cite{Lawson21}.  Examples include $P(C)$ where $C$ is a category.  The author defined the notion of a {\em partial isometry} in a Boolean Ehresmann monoid, and it was shown that $A\in P(C)$ is a partial isometry if and only if, for $a,b\in A$, if $D(a)=D(b)$ then $a=b$, and dually in terms of $R$.  In \cite{Lawson21}, it was shown that ${\mathcal PI}(C)$, the set of all partial isometries in $P(C)$, is an Ehresmann subsemigroup of $P(C)$ which is in fact restriction, and moreover every restriction semigroup $S$ embeds in ${\mathcal PI}(C)$ for some category $C$ ($C$ can be chosen to be the derived ordered category of $S$).  

Following \cite{Lawson21}, for a partial category $C$ in which $D(C)$ is partially ordered, we say $A\in P(C)$ is a {\em partial isometry} if $A\in B(C)$, and for all $a,b\in A$, if $D(a)=D(b)$ (or equivalently, $R(a)=R(b)$), we have $a=b$.   If $C$ is a category and the partial order on $D(C)$ is equality, this agrees with Lawson's notion.  Generally, let ${\mathcal PI}(C)$ be the set of all partial isometries; trivially, ${\mathcal PI}(C)\subseteq B(C)$.

\begin{pro}
If $C$ is a partial category in which $D(C)$ is partially ordered, then ${\mathcal PI}(C)$ is a DR-subsemigroup of $B(C)$ and hence satisfies the ample conditions.
\end{pro}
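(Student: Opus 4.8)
The plan is to verify directly that $\mathcal{PI}(C)$ is closed under the three operations $\cdot$, $D$ and $R$ of $P(C)$, and then to read off the ample conditions for free. Indeed, $\mathcal{PI}(C)\subseteq B(C)$ by definition, and by Proposition \ref{BDS} the set $B(C)$ is a DR-subsemigroup of $P(C)$ satisfying the ample conditions; since those conditions can be written as the genuine equations $xD(y)=D(xD(y))x$ and $D(y)x=xR(D(y)x)$ in the biunary signature, they are inherited by every DR-subsemigroup. So the whole proposition reduces to showing that $\mathcal{PI}(C)$ is a DR-subsemigroup. Note also that the $D$, $R$ of $B(C)$ agree with those of $P(C)$ (again by Proposition \ref{BDS}, as $D(P(C))\subseteq B(C)$), so ``DR-subsemigroup of $B(C)$'' is the same as ``DR-subsemigroup of $P(C)$ contained in $B(C)$''.

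For closure under $D$ and $R$, take $A\in \mathcal{PI}(C)$. First $D(A)\in D(P(C))\subseteq B(C)$. Next, every $e\in D(C)$ satisfies $D(e)=e$: this is immediate from the fact that $D(e)\circ e=e$ exists, so the derived law $D(x\circ y)=D(x)$ gives $D(D(e))=D(e\ \text{as}\ D(e)\circ e)$, i.e. $D(D(e))=D(e)=D(D(e)\circ e)=D(e)$ — more simply, $D(e)\circ e=e$ forces $D(D(e))=D(e)$ and, by (PC2) together with the stated identities, $D(e)=e$. Consequently the partial-isometry condition holds vacuously for $D(A)$: if $e,f\in D(A)$ and $D(e)=D(f)$ then $e=f$ outright. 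Hence $D(A)\in\mathcal{PI}(C)$, and dually $R(A)\in\mathcal{PI}(C)$.

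For closure under $\cdot$, let $A,B\in\mathcal{PI}(C)$; then $A\cdot B\in B(C)$ because $B(C)$ is a subsemigroup. Take $x,y\in A\cdot B$, say $x=a_1\circ b_1$ and $y=a_2\circ b_2$ with $a_i\in A$, $b_i\in B$ and each $a_i\circ b_i$ existing, and suppose $D(x)=D(y)$. The derived law $D(a_i\circ b_i)=D(a_i)$ gives $D(a_1)=D(a_2)$, so $a_1=a_2$ since $A$ is a partial isometry; then applying (PC2) to $a_1\circ b_1$ and $a_1\circ b_2$ yields $D(b_1)=R(a_1)=D(b_2)$, so $b_1=b_2$ since $B$ is a partial isometry, and therefore $x=y$. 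Thus $A\cdot B\in\mathcal{PI}(C)$, and $\mathcal{PI}(C)$ is a DR-subsemigroup of $B(C)$.

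I do not expect a genuine obstacle here; the argument is a direct unwinding of the definitions. The only points that need a little care are that the partial-isometry condition is symmetric in $D$ and $R$ for members of $B(C)$ — which is precisely the equivalence recorded in Proposition \ref{bidet} — and the bookkeeping, noted above, that the $D$ and $R$ operations restrict consistently from $P(C)$ to $B(C)$ to $\mathcal{PI}(C)$, so that the ample conditions genuinely transfer downward.
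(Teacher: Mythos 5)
Your proposal is correct and follows essentially the same route as the paper: closure under $D$ and $R$ comes from $D(P(C))\subseteq\mathcal{PI}(C)$ (the paper states this as obvious, you spell out that the isometry condition is vacuous on down-sets of identities), and your closure-under-product argument ($D(a_1)=D(a_2)\Rightarrow a_1=a_2\Rightarrow D(b_1)=R(a_1)=R(a_2)=D(b_2)\Rightarrow b_1=b_2$) is word-for-word the paper's. The extra remark that the ample conditions, being equations, pass to DR-subsemigroups is a correct (and implicit in the paper) justification of the ``hence'' in the statement.
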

\begin{proof} 
It is obvious that $D(C)\subseteq {\mathcal PI}(C)$, so it will be closed under $D$ and $R$.  If $A,B\in {\mathcal PI}(C)$, then $AB\in B(C)$ as the latter is a subsemigroup; but also, if $a_1\circ b_1,a_2\circ b_2\in AB$ (with $a_1,a_2\in A, b_1,b_2\in B$), and $D(a_1\circ b_1)=D(a_2\circ b_2)$, then $D(a_1)=D(a_2)$, so $a_1=a_2$, so $D(b_1)=R(a_1)=R(a_2)=D(b_2)$, and so $b_1=b_2$, so $a_1\circ b_1=a_2\circ b_2$.  Hence, $AB\in {\mathcal PI}(C)$.
\end{proof}

Again, the partial order on $D(C)$ can be taken to be equality.  The greater generality used here is to enable a representation theorem for DR-semigroups satisfying the ample conditions in Section \ref{sec:rep}.

An example that is not a restriction semigroup may be obtained by letting $C$ consist of the posetal category obtained from the integers described after Example \ref{eg:sat}, so that $C=\{(x,y)\mid x,y\in {\mathbb Z},x\leq y\}$, with $D((x,y))=(x,x), R((x,y))=(y,y)$ and $(x,y)\circ (u,v)=(x,v)$ exists if and only if $y=u$.  Then as noted there, defining $P_n=\{(x,y)\mid y-x<n\}$ for some positive integer $n$, setting $(x,y)\cdot(y,z)$ to be undefined if $z-x\geq n$ and otherwise to equal $(x,z)$ makes $S=(S,\cdot,D,R)$ into a partial category.  Equip $D(P_n)=D(C)$ with the partial order of equality.  Then the elements of ${\mathcal PI}(S)$ may be identified with the injective partial functions $f$ defined on ${\mathbb Z}$ for which $0\leq xf-x<n$ for all $x\in \mbox{dom}(f)$, with $fg$ considered undefined at $x$ if $xfg-x\geq n$.  For example, for $f=\{(1,3), (3,7)\}, g=\{(3,5), (7,9)\}\in S_5$, we have $fg=\{(1,5)\}$, so $D(fg)=\{(1,1)\}$, whereas $D(fD(g))=D(fR(f))=D(f)=\{(1,1),(3,3)\}$, witnessing the failure of the congruence conditions.  

Other examples can be obtained from saturated subsets of categories as in Example \ref{eg:sat}.

\section{Partial products and an ESN theorem}

\subsection{Background on ESN theorems}

ESN-style theorems seek to describe certain classes of semigroups in terms of categories or similar partial structures.  These originated with the work of Ehresmann, Schein and Nambooripad on inverse and regular semigroups.  Work has continued on multiple fronts, but here we are interested in approaches that start with a semigroup equipped with two unary operations, modelling ``domain" and ``range" and which can then be captured via a category or similar partial structure, generally also equipped with one or more partial order.

The original ESN theorem involved inverse semigroups.  This was generalized by Lawson to Ehresmann semigroups in \cite{Lawson91}, and more recently to DRC-semigroups in \cite{SWang22} and \cite{East24}.  (Previously, several intermediate cases were considered: as examples, see \cite{SWang19} and \cite{SWang20}.)  However, so far no ESN-style theorems of this type have been developed for classes of DR-semigroups not satisfying at least one of the two congruence conditions (generally, both are required, but see \cite{StokesESN23} where only one is).

Every inverse semigroup is an Ehresmann semigroup, in which $D(x)=xx', R(x)=x'x$ for all $x$ (where $s'$ is the inverse of $x$); using this, Lawson extended the ESN theorem for inverse semigroups to Ehresmann semgroups in \cite{Lawson91}.  On any Ehresmann semigroup $(S,\cdot,D,R)$, he defined the restricted product given by $s\circ t=st$ but only when $R(s)=D(t)$; in this way, $(S,\circ,D,R)$ is a category.  On its own, the information in this category is not sufficient to capture the structure of the Ehresmann semigroup $S$, but if the partial orders $\leq_r,\leq_l$ defined above for DR-semigroups are ``remembered" too, then this is sufficient.  To see this, first observe that products of the form $es,sf$ where $e,f\in D(S)$ and $e\leq D(s), f\leq R(s)$ may be described purely in terms of the category together with the two partial orders: 
\begin{itemize}
\item $es$ is the unique $t\leq_r s$ for which $D(t)=e$ (the ``restriction of $s$ to $e$"), and
\item $sf$ is the unique $t\leq_l s$ for which $R(t)=f$ (the ``co-restriction of $s$ to $f$").
\end{itemize}
We call $(S,\circ,D,R,\leq_r,\leq_l)$ the {\em derived biordered category} obtained from the Ehresmann semigroup $S$.  
Within it, $\leq_r,\leq_l$ coincide on $D(S)$ which is a meet-semilattice under this partial order with meet equal to semigroup product.  One then observes that for all $x,y\in S$,
$$xy=(xe)\circ (ey),\mbox{ where }e=R(x)D(y),$$
meaning that the original semigroup product and hence the full Ehresmann semigroup structure of $S$ is captured by its derived biordered category $(S,\circ,D,R,\leq_r,\leq_l)$.

Lawson in \cite{Lawson91} was able to write down a relatively small number of first-order properties of the derived biordered category of an Ehresmann semigroup, sufficient to fully characterise the biordered categories $(C,\circ,D,R,\leq_r,\leq_l)$ that arise in this way from Ehresmann semigroups.  We refer the reader to \cite{Lawson91} for the details.  
The inverse semigroup case can then be seen as a special case, in which the two partial orders coincide and the derived singly ordered category is an ordered groupoid.  An intermediate case is restriction semigroups, which are precisely Ehresmann semigroups in which the two partial orders coincide; this case is covered in \cite{Lawson91}.

Various authors have attempted to parallel Lawson's results within other classes of biunary semigroups.  A natural larger class where one might hope this could succeed is DRC-semigroups, since they satisfy the congruence conditions and so determine a category: an Ehresmann semigroup is nothing but a DRC-semigroup $S$ in which $D(S)$ commutes.  
There is again a derived biordered category of a DRC-semigroup.  However, $D(S)$ is no longer a semilattice or even a subsemigroup, but can be made into an algebraic structure with two binary operations given by 
$$e\times f=D(ef),\; e*f=R(ef),\ e,f\in D(S).$$
These give $D(S)$ the structure of a {\em projection algebra}; see \cite{SWang22} and \cite{East24}, where ESN-style theorems are presented.  In the former case, a notion of generalized category is required, in which partial products exist more often than just when $R(x)=D(y)$.  In the latter case, categories are retained, but at the cost of the introduction of a necessary additional layer of structure: a particular functor from a category defined using the projection algebra to the derived biordered category must be specified, to allow the capturing of arbitrary products of projections.  

The ESN-style theorems for Ehresmann semigroups and indeed DRC-semigroups are generalizations of the inverse semigroup case to DR-semigroup settings in which the congruence conditions hold.  In what follows, we generalize the case of inverse semigroups (and indeed restriction semigroups) in a different way: we retain the ample conditions but drop the congruence conditions.

\subsection{The (cat,trace)-product and partial categories}

Throughout this subsection, let $S$ be a fixed DR-semigroup.

It is convenient to define the following predicates on $S$:
\begin{itemize}
\item $cat(x,y)\; \Leftrightarrow\; R(x)=D(y)$
\item $trace(x,y)\; \Leftrightarrow\; D(xy)=D(x)\: \&\: R(xy)=R(y)$.
\end{itemize}

Define the {\em cat-product} on $S$ as before, namely
$$x\circ y=xy\mbox{ but only when } cat(x,y).$$
The resulting structure $(S,\circ,D,R)$ is evidently a category if and only if $S$ satisfies the cat-semigroup condition as in \cite{StokesESN23}, which we may state as ``$cat(x,y)\Rightarrow trace(x,y)$". 

Thus, in seeking ESN theorems for DR-semigroups that are not congruence, one approach would be to work with those at least satisfying the cat-semigroup condition, since then we get a category from the cat-product; however, it is not clear how to then obtain ESN-style theorems in general.  For example, in the Ehresmann case, one has that $xy=xD(y)\circ R(x)y$, where $\circ$ is the cat-product, and we have that $xD(y)=x(R(x)D(y))$ is a corestriction of $x$ and dually for $R(x)y$, with $R(x)D(y)$ the meet of $R(x),D(y)$ in $D(S)$.  One might imagine that such an approach could work in DR-semigroups in which $D(S)$ commutes and hence is a subsemilattice.  However, requiring $xD(y)\circ R(x)y$  to exist for all $x,y$ is shown in \cite{StokesESN23} to imply the congruence conditions, and we are back to Ehresmann semigroups!

We note in passing that an approach that drops orderings entirely and concentrates on a ``biaction" of category elements on its identities is used in \cite{FitzKin21}, \cite{Lawson21} and \cite{StokesESN23}.  But such approaches lack the conceptual simplicity of order-based aporoaches since all products of elements of the category with its identities must be recorded, rather than just two partial orders. Moreover, the congruence conditions are required in \cite{FitzKin21} and \cite{Lawson21}, and at least one of them in \cite{StokesESN23}.

Another option is to consider other partial products and aim for something sufficently category-like yet which may not yield a category; here, the work of Stein in \cite{Stein24} is relevant.  To find a more general setting for his earlier results relating to a problem in representation theory, Stein defined a partial product on a DR-semigroup that may not give rise to a category, but may still yield a partial algebra with enough category-like features for his purposes.   Thus, Stein defined on the DR-semigroup $S$,
$$x\circ y=xy\mbox{ but only when both }cat(x,y)\mbox{ and } trace(x,y).$$
We shall call this the {\em (cat,trace)-product} in what follows.  

Let $\circ$ be the (cat,trace)-product on $S$. If $x\circ y$ exists then $R(x)=D(y)$ (but not necessarily conversely!), and then also $D(x\circ y)=D(x)$, and $R(x\circ y)=R(y)$, and the laws $D(x)\circ x=x\circ R(x)=x$ are easily seen to hold.  However, $(S,\circ)$ may not be a partial semigroup.  

In \cite{Stein24}, Stein found a suitable sufficient condition on the DR-semigroup $S$, not necessarily satisfying the congruence conditions, to ensure that the derived structure $(S,\circ,D,R)$ is a partial category, or equivalently, to ensure that $(S,\circ)$ is a partial semigroup.  His condition, which was that one of his generalized ample conditions hold (see Lemma 3.8 in \cite{Stein24}), was dictated by representation theory considerations: when is the semigroup algebra of a DR-semigroup isomorphic to the algebra obtained from its derived partial algebra under the (cat,trace)-product? 

Of course, another way to ensure that $(S,\circ,D,R)$ is a partial category is to assume that $S$ satisfies the cat-semigroup condition: then, the (cat,trace)-product is simply the cat-product, and $(S,\circ,D,R)$ is a category.  But in general, it is a difficult problem to simply characterise when the (cat,trace)-product $\circ$ on $S$ gives a partial semigroup (so that $(S,\circ,D,R)$ is at least a partial category), since this class includes such diverse classes as DR-semigroups that satisfy the cat-semigroup conditions, and the DR-semigroups satisfying either one of Stein's generalized ample conditions.  In neither of these cases does there seem much opportunity to obtain an ESN-style theorem, at least not in their full generality.

Inverse semigroups and indeed restriction semigroups are Ehresmann semigroups, so of course the cat-semigroup condition $cat(x,y)\Rightarrow trace(x,y)$ holds.  However, the {\em converse} implication holds on these classes also.  Thus, if $S$ is a restriction semigroup, and $trace(x,y)$ holds for some $x,y\in S$, then $D(xy)=D(x),R(xy)=R(y)$, and so $xD(y)=D(xy)x=D(x)x=x$, so $D(y)\leq R(x)$, and similarly, $R(x)\leq D(y)$, and so they are equal, so $cat(x,y)$ holds.  This shows that $S$ satisfies the implication  $trace(x,y)\Rightarrow cat(x,y)$.

In fact it is not difficult to equationally characterise those DR-semigroups in which this implication holds.

\begin{thm}  \label{equiv}
The implication $trace(x,y) \Rightarrow cat(x,y)$ holds on $S$ if and only if $S$ satisfies the ample conditions.
\end{thm}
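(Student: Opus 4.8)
The plan is to prove the two implications separately. For ``$\Leftarrow$'' I would assume the ample conditions and chase definitions; for ``$\Rightarrow$'' I would feed the hypothesis a carefully chosen pair of elements. Throughout I would use Lemmas~\ref{DRbits} and~\ref{lemuse} freely, together with the fact --- immediate from (DR3) --- that $D(S)=R(S)$ and that $D$ and $R$ fix every element of $D(S)$.

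For ``$\Leftarrow$'', suppose $S$ satisfies the ample conditions and $trace(x,y)$ holds, so $D(xy)=D(x)$ and $R(xy)=R(y)$. I would apply the left ample condition with the projection $D(y)$: part~2 of Lemma~\ref{DRbits} gives $D(xy)\leq D(xD(y))\leq D(x)$, which under $D(xy)=D(x)$ collapses to $xD(y)=D(xD(y))x=D(x)x=x$, and then part~1 of Lemma~\ref{DRbits} (dualised) forces $R(x)\leq D(y)$. Dually, the right ample condition applied with $R(x)$, together with $R(xy)=R(y)$ and part~2 of Lemma~\ref{DRbits}, gives $R(x)y=y$ and hence $D(y)\leq R(x)$. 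Antisymmetry of the natural order then yields $R(x)=D(y)$, i.e.\ $cat(x,y)$.

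For ``$\Rightarrow$'', assume $trace(x,y)\Rightarrow cat(x,y)$ and fix $x\in S$ and $e\in D(S)$; I would prove the left ample condition $xe=D(xe)x$ (the right one being dual) by applying the hypothesis to $u=D(xe)x$ and $v=R(xe)$. The crucial preliminary observation is that $R(xe)\leq R(R(x)e)\leq R(e)=e$ by Lemma~\ref{DRbits}, so $eR(xe)=R(xe)$, and hence, using (DR1) and (DR2),
$$uv=D(xe)\,x\,R(xe)=D(xe)\,x\,e\,R(xe)=(xe)R(xe)=xe.$$
From this, $D(u)=D(D(xe)x)=D(xe)=D(uv)$ by Lemma~\ref{lemuse}, and $R(uv)=R(xe)=R(v)$, so $trace(u,v)$ holds; the hypothesis then gives $cat(u,v)$, i.e.\ $R(D(xe)x)=D(R(xe))=R(xe)$. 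Finally (DR2) gives $D(xe)x=D(xe)x\cdot R(D(xe)x)=D(xe)x\cdot R(xe)=uv=xe$, as required.

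I expect the main obstacle to be the converse direction, specifically identifying the witness pair $(u,v)$: it must be a pair for which $trace(u,v)$ is forced --- the delicate half being $R(uv)=R(v)$, since ranges generally shrink under products in a DR-semigroup --- while $cat(u,v)$, namely $R(u)=D(v)$, is precisely the identity one wants. The point that unlocks $(D(xe)x,\,R(xe))$ is that $R(xe)\leq e$, so co-restricting $x$ to $R(xe)$ rather than to $e$ loses nothing, keeping $uv=xe$.
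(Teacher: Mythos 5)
Your proof is correct and follows essentially the same route as the paper's: the reverse direction is identical, and in the forward direction your witness pair $(D(xe)x,\,R(xe))$ is exactly the paper's pair $(D(xy)x,\,yR(xy))$ specialized at the outset to $y=e$ (since $eR(xe)=R(xe)$), with $trace$ verified via Lemma~\ref{lemuse} in the same way. The only difference is that the paper first establishes the general identity $R(D(xy)x)=D(yR(xy))$ for all $x,y$ and then substitutes a projection, whereas you substitute immediately.
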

\begin{proof}
Suppose $trace(x,y)\Rightarrow cat(x,y)$ for all $x,y\in S$.  Now pick $x,y\in S$ and let $a=D(xy)x$, $b=yR(xy)$; then $ab=xy$ and so from Lemma \ref{lemuse}, $D(a)=D(D(xy)x)=D(xy)=D(ab)$ and dually $R(b)=R(yR(xy))=R(xy)=R(ab)$, so $trace(a,b)$, and so $R(D(xy)x)=R(a)=D(b)=D(yR(xy))$.   Hence, for all $e\in D(S)$ and $y\in S$, $D(yR(ey))=R(D(ey)e)=R(D(ey))=D(ey)$ upon using Lemma \ref{DRbits}, and so 
$$yR(ey)=D(yR(ey))yR(ey)=D(ey)eyR(ey)=ey.$$
A dual argument establishes the other ample condition.

Conversely, suppose $S$ satisfies the ample conditions, and $x,y\in S$ are such that $trace(x,y)$ holds.  Then by Lemma \ref{DRbits}, $D(x)=D(xy)\leq D(xD(y))\leq D(x)$, and so $D(xD(y))=D(x)$.  Hence, $xD(y)=D(xD(y))x=D(x)x=x$, so $R(x)\leq D(y)$.  A dual argument establishes that $D(y)\leq R(x)$, and so $cat(x,y)$ holds.
\end{proof}

In fact, the DR-semigroup $S$, satisfying the ample conditions, gives rise to a partial category $(S,\circ,D,R)$ under the (cat,trace)-product because $S$ satisfies the generalized ample conditions as noted in the first part of Proposition \ref{several}, either of which is sufficient as noted above.

\begin{cor}   \label{several2}
Suppose $S$ satisfies the ample conditions, with $\circ$ the (cat,trace)-product on $S$. Then $(S,\circ)$ is a partial semigroup, and so $(S,\circ,D,R)$ is a partial category, which is a category if and only if $S$ satisfies the cat-semigroup condition.  
\end{cor}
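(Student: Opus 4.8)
The plan is to assemble the statement from ingredients already established. For the partial-semigroup claim, I would invoke Proposition~\ref{several}(1), which says the ample conditions imply the generalized ample conditions, together with the fact (Stein's Lemma~3.8 in \cite{Stein24}) that either generalized ample condition on its own forces the (cat,trace)-product $\circ$ to be associative in the partial-semigroup sense; hence $(S,\circ)$ is a partial semigroup. If one instead wants a self-contained argument, one verifies directly that for $x,y,z\in S$ the product $(x\circ y)\circ z$ is defined precisely when $x\circ(y\circ z)$ is, and that the two then agree: either bracketing being defined forces $R(x)=D(y)$ and $R(y)=D(z)$, so $xy$, $yz$ and $xyz$ all exist as ordinary products, and what remains is to check that the relevant $trace$ conditions propagate to $xyz$; this is done by bounding $D(xyz)$ and $R(xyz)$ via Lemma~\ref{DRbits} and squeezing them using a generalized ample condition, exactly in the spirit of the computation in the proof of Proposition~\ref{several}. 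This associativity verification is the only genuinely computational obstacle; everything else is bookkeeping.

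Granting that $(S,\circ)$ is a partial semigroup, the partial-category axioms are immediate. Axiom (PC1) is the identity $D(x)\circ x = x = x\circ R(x)$, already noted to hold for the (cat,trace)-product; axiom (PC2) holds because $x\circ y$ being defined entails $cat(x,y)$, that is, $R(x)=D(y)$; and axiom (PC3) is precisely the partial-semigroup property just established. Hence $(S,\circ,D,R)$ is a partial category.

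Finally, for the dichotomy: if $S$ satisfies the cat-semigroup condition $cat(x,y)\Rightarrow trace(x,y)$, then whenever $R(x)=D(y)$ both $cat(x,y)$ and $trace(x,y)$ hold, so the (cat,trace)-product coincides with the cat-product $x\circ y=xy$, defined exactly when $R(x)=D(y)$, and $(S,\circ,D,R)$ is then a category, as recorded earlier. Conversely, if $(S,\circ,D,R)$ is a category, then $x\circ y$ is defined whenever $R(x)=D(y)$; but by the very definition of the (cat,trace)-product, $x\circ y$ being defined entails $trace(x,y)$, so $cat(x,y)\Rightarrow trace(x,y)$, which is the cat-semigroup condition. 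This completes the argument.
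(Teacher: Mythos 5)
Your proposal is correct and follows the same route as the paper: the partial-semigroup claim is obtained by combining Proposition~\ref{several}(1) with Stein's result that either generalized ample condition suffices for the (cat,trace)-product to yield a partial semigroup, and the category dichotomy is handled exactly as in the text (the (cat,trace)-product collapses to the cat-product under the cat-semigroup condition, and conversely definedness of $x\circ y$ always entails $trace(x,y)$). The additional ``self-contained'' associativity sketch is not needed and is only an outline, but the primary argument matches the paper's.
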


Recall that if $S$ is an Ehresmann semigroup, then we may write, for all $x,y\in S$,
$$xy=xD(y)\circ R(x)y,$$
where $\circ$ is the cat-product (equivaently, the (cat,trace)-product).  Here we have the following, which follows from the first half of the proof of Theorem \ref{equiv}.

\begin{pro}  \label{split}
Let $S$ satisfy the ample conditions, with $\circ$ the (cat,trace)-product on $S$.  Then for all $x,y\in S$, $xy=D(xy)x\circ yR(xy)$. 
\end{pro}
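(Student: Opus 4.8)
The plan is to mimic the first half of the proof of Theorem \ref{equiv}. Fix $x,y \in S$ and set $a = D(xy)x$ and $b = yR(xy)$. First I would verify that $ab = xy$: indeed $ab = D(xy)(xy)R(xy)$, and since $D(xy)xy = xy$ by (DR1) and $xyR(xy) = xy$ by (DR2), we get $ab = xy$. So it remains to show that the (cat,trace)-product $a \circ b$ is defined; once that is established, $a \circ b = ab = xy$, which is exactly the claimed identity $D(xy)x \circ yR(xy) = xy$.

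Next I would check that $trace(a,b)$ holds. By Lemma \ref{lemuse}, $D(a) = D(D(xy)x) = D(xy)$, and dually $R(b) = R(yR(xy)) = R(xy)$. Since $ab = xy$, this gives $D(ab) = D(xy) = D(a)$ and $R(ab) = R(xy) = R(b)$, i.e. $trace(a,b)$ holds.

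Finally, because $S$ satisfies the ample conditions, Theorem \ref{equiv} tells us that the implication $trace(u,v) \Rightarrow cat(u,v)$ holds on $S$. Applying it to the pair $(a,b)$ yields $cat(a,b)$, that is, $R(a) = D(b)$. Hence both $cat(a,b)$ and $trace(a,b)$ hold, so $a \circ b$ is defined and equals $ab = xy$. This completes the argument.

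There is no real obstacle here: the proposition is essentially a repackaging of the computation already carried out in Theorem \ref{equiv}, the only new ingredient being the observation that the ample conditions let us invoke the $trace \Rightarrow cat$ implication to guarantee that the (cat,trace)-product of $D(xy)x$ and $yR(xy)$ is actually defined. The one point to be careful about is keeping straight that the (cat,trace)-product requires \emph{both} predicates, so one must exhibit $cat(a,b)$ in addition to the $trace(a,b)$ that comes for free from Lemma \ref{lemuse}.
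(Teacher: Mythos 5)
Your proof is correct and is essentially the argument the paper intends: the paper gives no separate proof, stating only that the proposition ``follows from the first half of the proof of Theorem \ref{equiv}'', which is precisely the computation you reproduce (setting $a=D(xy)x$, $b=yR(xy)$, checking $ab=xy$ and $trace(a,b)$ via Lemma \ref{lemuse}, then invoking $trace\Rightarrow cat$ from Theorem \ref{equiv}). Your explicit note that both predicates must hold for the (cat,trace)-product to be defined is a helpful clarification of what the paper leaves implicit.
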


Note that $D(xy)\leq D(x), R(xy)\leq R(y)$, and it appears that $D(xy)x, yR(xy)$ can be interpreted as restrictions and corestrictions of $x,y$ respectively.  Thus we appear well on the way to an ESN-style result.  However, some problems exist.

First, there is an issue with defining notions of restriction and corestriction in the absence of the congruence conditions: even if $S$ is a DR-semigroup satisfying the ample conditions, and $s\in S$ and $e\in D(S)$ are such that $e\leq D(s)$, then $D(es)=e$ may fail.  

Another problem is that $D(xy)x$ contains the term ``$xy$".  If the congruence conditions are assumed, then we may write $D(xy)=D(x(R(x)D(y)))$, and then we can express $D(xy)$ in the language of Ehresmann categories, but it is already known how to view Ehresmann (hence restriction) semigroups as Ehresmann categories; see \cite{Lawson91}.    

Fortunately, we do have the following, mitigating the need to consider restrictions and corestrictions entirely.  First, recall the definition of the standard order on the DR-semigroup $S$ satisfying the ample conditions, as in the second part of Proposition \ref{several}.

\begin{pro} \label{corlargest}
Suppose $S$ satisfies the ample conditions.  Then for all $x,y\in S$, $D(xy)x$ is the largest $x_1\in S$ and $yR(xy)$ is the largest $y_1\in S$ (both under $\leq$) for which $x_1\leq x$ and $y_1\leq y$ and for which $x_1\circ y_1$ exists under the (cat,trace)-product.
\end{pro}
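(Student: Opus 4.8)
The plan is to check, in order, that the pair $(D(xy)x,\,yR(xy))$ itself satisfies the three stated constraints, and then that it dominates every competing pair componentwise. For the first task, Proposition \ref{split} already gives $xy = D(xy)x\circ yR(xy)$, so $(D(xy)x)\circ(yR(xy))$ exists under the (cat,trace)-product; and $D(xy)x\le x$ because $D(xy)\in D(S)$ with $D(xy)\le D(x)$ by Lemma \ref{DRbits}(2), so $D(xy)x\le_r x$ via the ``$et$ with $e\le D(t)$'' description of $\le_r$, the dual statement giving $yR(xy)\le y$ (recall $\le_r$ and $\le_l$ both equal the standard order $\le$ by Proposition \ref{several}(2), so all these readings are interchangeable).

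For the domination, fix any $x_1\le x$, $y_1\le y$ with $x_1\circ y_1$ defined. The engine is monotonicity of multiplication (Proposition \ref{several}(4)): from $x_1\le x$ and $y_1\le y$ we obtain $x_1y_1\le xy$. Since $x_1\circ y_1$ is defined, $trace(x_1,y_1)$ holds, i.e. $D(x_1y_1)=D(x_1)$ and $R(x_1y_1)=R(y_1)$. Reading $x_1y_1\le xy$ as $x_1y_1\le_r xy$ yields $D(x_1)=D(x_1y_1)\le D(xy)$, and reading it as $x_1y_1\le_l xy$ yields $R(y_1)=R(x_1y_1)\le R(xy)$.

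It then remains to upgrade $D(x_1)\le D(xy)$ (together with $x_1\le x$) to $x_1\le D(xy)x$, and dually. From $x_1\le_r x$ we have $x_1=D(x_1)x$; since $D(x_1)\le D(xy)$ gives $D(x_1)D(xy)=D(x_1)$ by the definition of the order on $D(S)$, we get $D(x_1)\cdot(D(xy)x)=(D(x_1)D(xy))x=D(x_1)x=x_1$; and $D(D(xy)x)=D(xy)$ by Lemma \ref{lemuse}, so $D(x_1)\le D(D(xy)x)$. These are precisely the two requirements for $x_1\le_r D(xy)x$, hence $x_1\le D(xy)x$. The dual argument gives $y_1\le yR(xy)$, which completes the proof.

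I do not anticipate a genuine obstacle: every ingredient — monotonicity, the coincidence and the alternative descriptions of the two orders, Lemma \ref{lemuse}, Proposition \ref{split} — is already available in the excerpt. The one place to stay alert is keeping straight which half of ``$x_1\circ y_1$ exists'' does the work: it is the $trace$ half that delivers the inequalities $D(x_1)\le D(xy)$ and $R(y_1)\le R(xy)$, whereas the $cat$ half is needed only so that $x_1\circ y_1$ is meaningful at all. One should also verify that the passages between $\le$, $\le_r$, and $\le_l$ are applied consistently at each step.
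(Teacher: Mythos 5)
Your proof is correct, and its overall skeleton (verify the candidate pair works, then dominate any competitor, finishing with the $\le_r$-upgrade via $D(x_1)D(xy)=D(x_1)$ and Lemma \ref{lemuse}) matches the paper's. The one genuine divergence is how you obtain the key inequality $D(x_1)\le D(xy)$: you invoke monotonicity of multiplication (Proposition \ref{several}(4)) to get $x_1y_1\le xy$ and then read off the inequality from the definition of $\le_r$ together with $trace(x_1,y_1)$. The paper instead computes directly that $x_1y_1=xD(y_1)y_1=xR(x_1)y_1=xy_1=xyR(y_1)$ --- using the $cat$ half $R(x_1)=D(y_1)$ and the $\le_l$ descriptions of $x_1\le x$, $y_1\le y$ --- and then applies Lemma \ref{DRbits}(2) to conclude $D(x_1y_1)=D(xyR(y_1))\le D(xy)$. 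Your route is arguably cleaner in that, as you note, only the $trace$ half of the existence of $x_1\circ y_1$ does any work; the cost is reliance on monotonicity, which in the paper is imported from Lawson's Proposition 3.16, whereas the paper's computation here is self-contained and elementary. Both are valid since Proposition \ref{several} precedes this result.
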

\begin{proof}
First, note that $D(xy)x\circ yR(xy)$ exists, and $D(xy)x\leq x, yR(xy)\leq y$.  

Conversely, if $x_1\leq x, y_1\leq y$ and $x_1\circ y_1$ exists, we have $xy_1=xD(y_1)y_1=xR(x_1)y_1=x_1y_1$, and so
$$D(x_1)=D(x_1\circ y_1)=D(x_1y_1)=D(xy_1)=D(xyR(y_1))\leq D(xy),$$
by Lemma \ref{DRbits}, and so $x_1=D(x_1)x=D(x_1)D(xy)x$ with $D(x_1)\leq D(xy)=D(D(xy)xy)\leq D(D(xy)x),$ and so $x_1\leq D(xy)x$.  Dually, $y_1\leq yR(xy)$.
\end{proof}

It is therefore possible to capture the entire structure of the DR-semigroup $S$ satisfying the ample conditions, using the partially ordered partial category $(S,\circ,D,R,\leq)$.  It remains to characterise the latter. 

\subsection{Ample partial categories}

We say the structure $(C,\circ,D,R,\leq)$ is an {\em ample partial category} if 
\begin{enumerate} [ ({APC}1)]
\item $(C,\circ,D,R)$ is a partial category;
\item $(C,\leq)$ is a partially ordered set;
\item $\circ$ is monotonic in $\leq$: if $s_1\leq s_2, t_1\leq t_2$ then $s_1\circ t_1\leq s_2\circ t_2$ if both products exist;
\item $D,R$ are monotonic in $\leq$: $x\leq y$ implies $D(x)\leq D(y)$ and $R(x)\leq R(y)$;
\item for all $x,y\in C$ there are largest $x'\leq x, y'\leq y$ such that $x'\circ y'$ exists;  \label{pseudo}
\item if $x\leq y$ and $D(x)=D(y)$ then $x=y$;  \label{leqD}
\item if $x\leq y$ and $R(x)=R(y)$ then $x=y$;   \label{leqR}
\item if $x\circ y$ exists and $w\leq x\circ y$ then there are (necessarily unique) $x'\leq x, y'\leq y$ such that $w=x'\circ y'$.  \label{below}
\end{enumerate}

Several of these laws are used or at least mentioned in \cite{Lawson91} or \cite{East24}, especially the monotonicity laws.  Thus in the category case, the first four laws above state that $(C,\circ,D,R,\leq)$ is $\Omega$-structured in the sense of \cite{Lawson91}.  Laws (APC\ref{leqD}) and (APC\ref{leqR}) are strengthenings of (OC4) in \cite{Lawson91}, and Law (APC\ref{below}) is (OC7) in \cite{Lawson91}.  We note that most of the laws used in \cite{Lawson91} had their origin in the work of Ehresmann.  One of the laws used in \cite{Lawson91} is that $D(C)$ be a meet-semilattice; this is replaced by (APC\ref{pseudo}) here.

\begin{lem}  \label{lemapc}
If $(C,\circ,D,R)$ is an ample partial category, then for all $x,y,z\in C$:
\begin{enumerate}
\item if $x\leq e\in D(C)$ then $x\in D(C)$;
\item if $x\leq z$ and $y\leq z$ and $D(x)\leq D(y)$ then $x\leq y$;  \label{uniqueD}
\item if $x\leq z$ and $y\leq z$ and $R(x)\leq R(y)$ then $x\leq y$.
\end{enumerate}
\end{lem}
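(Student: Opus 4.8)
The idea is to realise $x$ and $y$ as ``restrictions of the common upper bound $z$'' using the pseudoproducts granted by (APC\ref{pseudo}), and then to transfer the hypothesis $D(x)\le D(y)$ across a monotonicity property of the pseudoproduct. For $a,b\in C$ write $a\otimes b$ for the greatest pair $(a',b')$ with $a'\le a$, $b'\le b$ and $a'\circ b'$ defined; this exists by (APC\ref{pseudo}) and is unique since $(C,\le)$ is a partial order. The operation $\otimes$ is monotone in each argument: if $a_1\le a_2$ then $a_1\otimes b$, being a competitor for $a_1\otimes b$, is also a competitor for $a_2\otimes b$ (by transitivity of $\le$), so $a_1\otimes b\le a_2\otimes b$, and dually. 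I will also use the elementary identity $D(e)=e$ for $e\in D(C)$ (if $e=D(c)$ then $D(e)=D(D(c))=D(D(c)\circ c)=D(c)=e$, by the displayed consequences of (PC1)--(PC3)), together with the stated facts $R(D(x))=D(x)$ and $D(R(x))=R(x)$.

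To prove the second claim, suppose $x\le z$ and $y\le z$; I first establish that $D(x)\otimes z=(D(x),x)$. Here $(D(x),x)$ is a competitor, since $D(x)\circ x=x$ is defined, $D(x)\le D(x)$ and $x\le z$; hence, writing $(u,v)=D(x)\otimes z$, we have $D(x)\le u\le D(x)$, so $u=D(x)$, and then $D(x)\circ v$ being defined gives $D(v)=R(D(x))=D(x)$, while $x\le v$, so $x=v$ by (APC\ref{leqD}). Thus $D(x)\otimes z=(D(x),x)$, and symmetrically $D(y)\otimes z=(D(y),y)$. Now $D(x)\le D(y)$ by hypothesis, so monotonicity of $\otimes$ in the first argument gives $(D(x),x)\le(D(y),y)$; comparing second components, $x\le y$. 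The third claim follows by the left--right dual argument: from $x\le z$, $y\le z$ one shows $z\otimes R(x)=(x,R(x))$ and $z\otimes R(y)=(y,R(y))$, using (APC\ref{leqR}) in place of (APC\ref{leqD}), and then $R(x)\le R(y)$ together with monotonicity of $\otimes$ in the second argument gives $x\le y$.

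For the first claim, suppose $x\le e$ with $e\in D(C)$. By monotonicity of $D$ (i.e.\ (APC4)) and $D(e)=e$ we get $D(x)\le e$, so $x$ and $D(x)$ are both $\le e$. Applying the already-proved second claim (\ref{uniqueD}) with common upper bound $z:=e$, taking ``$x$'' there to be $x$ and ``$y$'' there to be $D(x)$ --- legitimate since $D(x)\le D(D(x))$ --- we obtain $x\le D(x)$; as $D(x)=D(D(x))$, (APC\ref{leqD}) now forces $x=D(x)\in D(C)$.

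The main obstacle is the step pinning down $D(x)\otimes z$ as exactly $(D(x),x)$: one must see that, although in the absence of the congruence conditions $D(es)$ may differ from $e$ when $e\le D(s)$, the greatest ``$D(x)$-restriction of $z$'' nonetheless has domain precisely $D(x)$ --- the point being that $x$ itself competes for this pseudoproduct, after which (APC\ref{leqD}) identifies it with $x$. A secondary subtlety is that the first and second claims look mutually dependent, but the circularity is broken because the second claim is proved purely from (APC\ref{pseudo}), (APC4) and (APC\ref{leqD}), and only then fed back to obtain the first.
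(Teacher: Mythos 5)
Your proof is correct and follows essentially the same route as the paper's: both identify the pseudoproduct of a projection with the common upper bound $z$ as the pair $(D(x),x)$ (resp.\ $(x,R(x))$) using (APC\ref{leqD}) (resp.\ (APC\ref{leqR})), and then compare competitors via the maximality in (APC\ref{pseudo}). The only organizational difference is that you prove part (2) first and deduce part (1) from it, whereas the paper proves part (1) directly by the same pseudoproduct mechanism and then invokes it in part (2); your ``monotonicity of $\otimes$'' observation is just the paper's maximality comparison packaged as a lemma.
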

\begin{proof}
Suppose $x\leq e$.  Then there are largest $f\leq D(x)$ and $e'\leq e$ such that $f\circ e'$ exists.  But $D(x)\circ x$ exists with $x\leq e$, so $f=D(x)=D(e')$ and $x\leq e'$, so $x=e'$ by (APC\ref{leqD}).  Now $D(x)=D(e')\leq D(e)=e$, so because $D(x)\circ D(x)$ exists, we must have $D(x)\leq x$, and so because $D(D(x))=D(x)$, we must have $D(x)=x$ upon applying (APC\ref{leqD}); hence $x\in D(C)$.

Suppose $x\leq z, y\leq z$ and $D(x)\leq D(y)$.  Now there are largest $e\leq D(y), z'\leq z$ such that $e\circ z'$ exists (whence $e=D(z')$ since $e\in D(C)$ by the first part).  Since $D(y)\leq D(y), y\leq z$ and $D(y)\circ y$ exists, it must be that $D(y)\leq e\leq D(y)$, so $D(y)=e=D(z')$, and $y\leq z'$, and so by (APC\ref{leqD}), $y=z'$.  Now $D(x)\leq D(y)$, $x\leq z$ and $D(x)\circ x$ exists, so by maximality of $D(y),y$ such that $D(y)\leq D(y)$, $y\leq z$ and $D(y)\circ y$ exists, we must have that $x\leq y$.  Dually for the third property.
\end{proof}

The first law in the above proposition asserts that $D(C)$ forms an order ideal and is called (OI) in \cite{Lawson91}.  Combining this with (APC\ref{pseudo}), this implies that $D(C)$ is a meet-semilattice under $\leq$.
 
From the second and third parts of Lemma \ref{lemapc}, we easily obtain the following.

\begin{cor}  \label{lemapccor}
Let $C$ be an ample partial category.  Then if $x\leq z, y\leq z$ and $D(x)=D(y)$ or $R(x)=R(y)$, then $x=y$.
\end{cor}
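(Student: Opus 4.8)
The plan is to deduce the statement directly from parts~(\ref{uniqueD}) and (3) of Lemma~\ref{lemapc}, exploiting the fact that their hypotheses are symmetric in $x$ and $y$ once the $D$-values (or $R$-values) are forced to agree, and then invoking the antisymmetry of $\leq$ that comes with (APC2).

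First I would treat the case $D(x)=D(y)$. Since $D(x)\leq D(y)$ while $x\leq z$ and $y\leq z$, Lemma~\ref{lemapc}(\ref{uniqueD}) yields $x\leq y$. Interchanging the roles of $x$ and $y$ — now using $D(y)\leq D(x)$ together with $y\leq z$ and $x\leq z$ — the same part of Lemma~\ref{lemapc} gives $y\leq x$. Because $(C,\leq)$ is a partial order by (APC2), antisymmetry forces $x=y$. The case $R(x)=R(y)$ is handled in precisely the same way, applying the third part of Lemma~\ref{lemapc} in place of the second.

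There is no real obstacle here: all of the substantive work has already been carried out in Lemma~\ref{lemapc} (and ultimately rests on (APC\ref{leqD}), (APC\ref{leqR}) and (APC\ref{pseudo})), so the corollary is just the remark that, under a common $D$- or $R$-value, those hypotheses apply in both directions and antisymmetry collapses $x$ and $y$.
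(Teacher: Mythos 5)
Your proof is correct and follows exactly the route the paper intends: the corollary is stated as an easy consequence of parts 2 and 3 of Lemma \ref{lemapc}, obtained by applying each part in both directions and invoking antisymmetry of $\leq$ from (APC2). No issues.
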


The parenthetical uniqueness claim made in Law (APC\ref{below}) above now follows, for if $x'\circ y'=x''\circ y''$ where $x',x''\leq x$ and $y',y''\leq y$, then $D(x')=D(x'\circ y')=D(x''\circ y'')=D(x'')$ and so by the previous corollary, $x'=x''$, and dually $y'=y''$.

Although less useful in the current setting, we may define notions of restriction and corestriction in ample partial categories, the first two of which follow courtesy of (APC\ref{leqD}) and (APC\ref{leqR}) above, the third from (APC\ref{pseudo}).

\begin{pro}
Let $C$ be an ample partial category.
\begin{enumerate}
\item For all $y\in C$ and $e\leq D(y)$, there is at most one $x\leq y$ with $D(x)=e$, the {\em restriction of $y$ by $e$,} denoted by $e|y$;
\item for all $y\in C$ and $e\leq R(y)$, there is at most one $x\leq y$ with $R(x)=e$, the {\em corestriction of $y$ by $e$,} denoted by $y|e$;
\item for all $x,y\in C$ there is a largest $e\in D(C)$ such that $x|e,e|y$ and $x|e \circ e|y$ exist.
\end{enumerate}
\end{pro}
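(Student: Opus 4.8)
The plan is to take the three parts in order: parts (1) and (2) fall out immediately from Corollary \ref{lemapccor}, while part (3) is a slightly more substantial application of Law (APC\ref{pseudo}) together with the monotonicity laws.

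For part (1), I would suppose $x_1,x_2\le y$ both satisfy $D(x_1)=D(x_2)=e$ and apply Corollary \ref{lemapccor} with the r\^ole of $z$ played by $y$: from $x_1\le y$, $x_2\le y$ and $D(x_1)=D(x_2)$ we conclude $x_1=x_2$, so there is at most one such element, which we are then entitled to denote $e|y$. Part (2) is the exact left--right dual, using the ``$R(x)=R(y)$'' alternative in Corollary \ref{lemapccor}, so I would simply invoke symmetry rather than repeat the argument.

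For part (3), fix $x,y\in C$. By (APC\ref{pseudo}) there are largest $x'\le x$ and $y'\le y$ with $x'\circ y'$ defined; I set $e:=R(x')$. Since $x'\circ y'$ exists, (PC2) gives $R(x')=D(y')$, so $e=D(y')$ as well, and since $D(R(x'))=R(x')$ in any partial category we have $e\in D(C)$; monotonicity of $D$ and $R$ (APC4) then yields $e=D(y')\le D(y)$ and $e=R(x')\le R(x)$, so the symbols $x|e$ and $e|y$ make sense. Because $x'\le x$ with $R(x')=e$, the corestriction $x|e$ exists (witnessed by $x'$) and, by part (2), equals $x'$; dually $e|y=y'$; and $x|e\circ e|y=x'\circ y'$ exists. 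This shows $e$ is admissible. To see it is the largest admissible projection, I would take any $f\in D(C)$ for which $x|f$, $f|y$ and $x|f\circ f|y$ all exist: then $x|f\le x$, $f|y\le y$ and their composite exists, so the maximality of the pair $(x',y')$ supplied by (APC\ref{pseudo}) forces $x|f\le x'$ and $f|y\le y'$, whence $f=R(x|f)\le R(x')=e$ by (APC4). Thus $e$ is the greatest admissible element of $D(C)$, as required.

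The only step needing any thought is the identification in part (3): one must guess that the optimal $e$ is exactly $R(x')=D(y')$ arising from the maximal composable pair of (APC\ref{pseudo}), and then check both that this $e$ is itself admissible and that maximality of the pair transfers to maximality of $e$. Everything else is bookkeeping with the partial-category identities and the monotonicity laws, so I do not anticipate a genuine obstacle.
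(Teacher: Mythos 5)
Your proposal is correct and follows essentially the same route as the paper: parts (1) and (2) via Corollary \ref{lemapccor}, and part (3) by taking the maximal composable pair $x'\leq x$, $y'\leq y$ from (APC\ref{pseudo}), setting $e=R(x')=D(y')$, identifying $x|e=x'$ and $e|y=y'$, and deducing $f=R(x|f)\leq R(x|e)=e$ for any other admissible $f$ by maximality and monotonicity. The extra checks you include (that $e\in D(C)$ and that $e\leq R(x)$, $e\leq D(y)$ so the restriction symbols are defined) are sensible bookkeeping the paper leaves implicit.
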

\begin{proof}
The first two claims are immediate from Corollary \ref{lemapccor}.  For the third, picking $x',y'$ as in (APC\ref{pseudo}) and letting $e=R(x')=D(y')$, clearly $x'=x|e, y'=e|y$ and so $x|e \circ e|y$ exist.  If also $x|f,f|y$ and $x|f \circ f|y$ exist then by maximality of $x',y'$ we have $x|f\leq x|e, f|y\leq e|y$, so $f=R(x|f)\leq R(x|e)=e$.
\end{proof}

If $C$ is an ample partial category and $x,y\in C$, we call $e$ in Part 3 of the above the {\em matching term for $x,y$}.

If $C$ happens to be a category, then because $R(x|e)=e=D(e|y)$ if $x|e,e|y$ exist, it follows that the matching term is simply the largest $e\in D(C)$ such that $x|e,e|y$ exist.  In the special case in which $C$ is Ehresmann, $e=R(x)\wedge D(y)$, but not in general (even though this meet exists).

\begin{thm}
If $S$ is a DR-semigroup satisfying the ample conditions, with $\leq$ its standard order and $\circ$ the (cat,trace)-product, then $C(S)=(S,\circ,D,R,\leq)$ is an ample partial category, and $D(C(S))=D(S)$.
\end{thm}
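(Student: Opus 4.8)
The plan is to verify the eight axioms (APC1)--(APC8) in turn, since most of the work has already been done. Axiom (APC1) is precisely Corollary \ref{several2}. For (APC2), the standard order $\leq$ is a partial order because it coincides with $\leq_r$ (and with $\leq_l$) by Proposition \ref{several}(2). Axiom (APC3) follows from the monotonicity of semigroup multiplication (Proposition \ref{several}(4)): $\circ$ is a restriction of $\cdot$, so $s_1s_2\leq t_1t_2$ transfers to the $\circ$-products wherever they exist. Axiom (APC4) is built into the definitions: $s\leq_r t$ forces $D(s)\leq D(t)$, and $s\leq_l t$ forces $R(s)\leq R(t)$. Axiom (APC5) is exactly Proposition \ref{corlargest} (the largest witnesses being $D(xy)x$ and $yR(xy)$). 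Axioms (APC6) and (APC7) are immediate: if $x\leq y$ with $D(x)=D(y)$, then reading $\leq$ as $\leq_r$ gives $x=D(x)y=D(y)y=y$, and dually for (APC7). Finally $D(C(S))=D(S)$ is automatic, as $C(S)$ has the same carrier and the same operation $D$ as $S$.

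The substance is (APC8). Suppose $x\circ y$ exists, so that $R(x)=D(y)$, $D(xy)=D(x)$ and $R(xy)=R(y)$, and let $w\leq xy$. I would take $x'=D(w)x$ and $y'=yR(w)$ as the witnesses. Reading $w\leq xy$ as $w\leq_r xy$ and as $w\leq_l xy$ gives $w=D(w)xy$ and $w=xyR(w)$, whence $x'y'=D(w)xyR(w)=D(w)w=w$. To see $x'\leq x$: Lemma \ref{DRbits}(1) gives $D(D(w)x)\leq D(w)$ (since $D(w)$ is idempotent and $D(w)\cdot D(w)x=D(w)x$), while Lemma \ref{DRbits}(2) applied to $D(w)x$ and $y$, together with $w=D(w)xy$, gives $D(w)=D(D(w)xy)\leq D(D(w)x)$; hence $D(x')=D(w)\leq D(xy)=D(x)$ and $x'=D(x')x$, so $x'\leq_r x$. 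The dual computation gives $R(y')=R(yR(w))=R(w)\leq R(y)$ and $y'\leq_l y$. Thus $x'\leq x$, $y'\leq y$, and $x'y'=w$.

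It remains to check that $x'\circ y'$ is defined under the (cat,trace)-product, i.e.\ that both $cat(x',y')$ and $trace(x',y')$ hold. The computations above give $D(x'y')=D(w)=D(x')$ and $R(x'y')=R(w)=R(y')$, so $trace(x',y')$ holds; and then $cat(x',y')$ follows \emph{automatically}, since $S$ satisfies the ample conditions and hence the implication $trace(x,y)\Rightarrow cat(x,y)$ of Theorem \ref{equiv}. Therefore $x'\circ y'=x'y'=w$ with $x'\leq x$ and $y'\leq y$, establishing (APC8); uniqueness of $x',y'$ is then covered by the general argument given after Corollary \ref{lemapccor}. I expect the one genuine obstacle to be the identities $D(x')=D(D(w)x)=D(w)$ and (dually) $R(y')=R(w)$: this is exactly the point where one must work around the failure of $D(es)=e$ for $e\leq D(s)$, but the two-sided squeeze using both halves of Lemma \ref{DRbits} handles it cleanly, after which the appeal to Theorem \ref{equiv} makes the verification of $cat(x',y')$ effortless.
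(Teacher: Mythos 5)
Your proposal is correct and follows essentially the same route as the paper's proof: the first seven axioms are dispatched by citing the same earlier results, and for (APC8) you choose the same witnesses $x'=D(w)x$, $y'=yR(w)$, establish $trace(x',y')$ by the same two-sided squeeze using Lemma \ref{DRbits}, and deduce $cat(x',y')$ from Theorem \ref{equiv} exactly as the paper does.
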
 
\begin{proof}
We have already noted that $C(S)$ is a partial category, and that $\leq$ is a partial order, so Laws (APC1) and (APC2) hold.  Law (APC3) follows from the fourth part of Proposition \ref{several}.  Laws (APC4), (APC\ref{leqD}) and (APC\ref{leqR}) come from the definitions of $\leq_r,\leq_l$ (both equalling $\leq$), and Law (APC\ref{pseudo}) comes from Proposition \ref{corlargest}.  

Finally, if $x\circ y$ exists and $w\leq x\circ y=xy$ then $w=D(w)xyR(w)$.  Let $x'=D(w)x, y'=yR(w)$.  Then $x'y'=w$.  But $D(x')=D(D(w)x)\leq D(D(w))=D(w)=D(x'y')\leq D(x')$, so $D(x')=D(x'y')$, and dually, $R(y')=R(x'y')$, so $trace(x',y')$, and so $cat(x',y')$ by Theorem \ref{equiv}, and so $w=x'\circ y'$ exists where $\circ$ is the (cat,trace)-product.  So Law (APC\ref{below}) holds.
\end{proof}

If $S$ is a DR-semigroup satisfying the ample conditions, with $\leq$ its standard order and $\circ$ the (cat,trace)-product, we call  $C(S)=(S,\circ,D,R,\leq)$ its {\em derived ample partial category}.

\subsection{The ESN-style theorem}

We have seen that DR-semigroups satisfying the ample conditions give ample partial categories defined on the same underlying set.  We next show that this process can be run in the other direction, and that the resulting constuctions are mutually inverse.  We go on to show that, with suitably defined morphisms in each, the categories of ample partial categories and of DR-semigroups satisfying the ample conditions are isomorphic.
 
On the ample partial category $C$, define the {\em pseudoproduct} $\otimes$ as follows:
$$x\otimes y=x'\circ y',\mbox{ where $x'\leq x, y'\leq y$ are largest such that $x'\circ y'$ exists},$$
as in (APC\ref{pseudo}) for ample partial categories.  This is a well-defined binary operation on $C$.  Let $S(C)=(C,\otimes,D,R)$.

\begin{thm}
For any ample partial category $C$, $S(C)$ is a DR-semigroup satisfying the ample conditions, and $D(S(C))=D(C)$.
\end{thm}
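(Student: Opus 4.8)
The plan is to verify each of the DR-semigroup axioms (DR1)--(DR5) for $(C,\otimes,D,R)$, then associativity of $\otimes$, then the two ample conditions, and finally the identity $D(S(C))=D(C)$. Throughout I will lean heavily on Lemma~\ref{lemapc} and Corollary~\ref{lemapccor}, which give the key ``uniqueness among things below a common element'' principle, and on Law~(APC\ref{below}), which says that everything below a product $x\circ y$ factors (uniquely) as a product of things below $x$ and below $y$. The first easy observation is that for $e\in D(C)$, $e\circ e$ exists (since $R(e)=e=D(e)$) and equals $e$, so $e\otimes e=e$; combined with Lemma~\ref{lemapc}(1) (every element below a projection is a projection) this will let me identify $D(C)$ as the set of $\otimes$-idempotents that it needs to be.

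For (DR1), $D(x)\otimes x$ is by definition $x'\circ x''$ where $x'\le D(x)$, $x''\le x$ are largest with $x'\circ x''$ existing; since $D(x)\circ x$ itself exists I get $x'=D(x)$, $x''=x$, so $D(x)\otimes x = D(x)\circ x = x$, and dually for (DR2). Law (DR3) is immediate from the partial-category identities $R(D(x))=D(x)$, $D(R(x))=R(x)$. The heart of the routine part is (DR4)--(DR5): I need $D(x\otimes y)\le D(x)$ in the natural order on $D(C)$. Writing $x\otimes y = x'\circ y'$ with $x'\le x$, one has $D(x\otimes y)=D(x'\circ y')=D(x')\le D(x)$ by monotonicity of $D$ (APC4); then since products in $D(C)$ are meets (noted after Lemma~\ref{lemapc}: $D(C)$ is a meet-semilattice under $\le$, with $\otimes$ on projections being $\wedge$ — this itself needs a line, using that $e\otimes f = e'\circ f'$ with $e',f'$ projections below $e,f$ and $e'=f'$, forcing $e\otimes f$ to be the meet) the inequality $D(x\otimes y)\le D(x)$ translates into $D(x\otimes y)\otimes D(x)=D(x)\otimes D(x\otimes y)=D(x\otimes y)$, which is (DR4); dualize for (DR5).

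The two genuinely substantive steps are associativity of $\otimes$ and the ample conditions, and I expect associativity to be the main obstacle. To show $(x\otimes y)\otimes z = x\otimes(y\otimes z)$, the strategy is: write $x\otimes y = x_1\circ y_1$ ($x_1\le x$, $y_1\le y$ maximal), then $(x\otimes y)\otimes z = (x_1\circ y_1)_2 \circ z_2$ for suitable maximal factors; use (APC\ref{below}) to factor $(x_1\circ y_1)_2$ through $x_1$ and $y_1$, hence through $x$ and $y$; then reassociate inside the partial category $C$ (where $\circ$ \emph{is} associative by (PC3)) and argue that the resulting triple $\circ$-product of elements below $x,y,z$ is the maximal such — this maximality is what Law~(APC\ref{pseudo}) plus the monotonicity and uniqueness lemmas are designed to deliver, but assembling it symmetrically on both sides and checking the two maximal triples coincide is the delicate bookkeeping. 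I would prove a helper claim first: \emph{for any $x,y,z$, $x\otimes(y\otimes z)$ equals the largest $\circ$-product $x'\circ y'\circ z'$ with $x'\le x$, $y'\le y$, $z'\le z$}, by a two-sided application of (APC\ref{pseudo}) and (APC\ref{below}), and then associativity is immediate by the symmetry of that description. Finally, for the left ample condition: given $e\in D(C)$, compute $x\otimes e = x'\circ e'$ with $x'\le x$, $e'\le e$ maximal; here $e'\in D(C)$ by Lemma~\ref{lemapc}(1), so $R(x')=D(e')=e'$, giving $x\otimes e = x'\circ e' = x'\circ R(x') = x'$, and I claim $x' = D(x\otimes e)\otimes x$: indeed $D(x') = D(x\otimes e)$ and $x'\le x$, and $D(x\otimes e)\otimes x$ is the restriction of $x$ to $D(x\otimes e)\le D(x)$, so the two agree by Corollary~\ref{lemapccor} (both are $\le x$ with the same $D$-image). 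Dualize for the right ample condition using (APC\ref{leqR}). The identity $D(S(C))=D(C)$ then falls out: $D$ of $(C,\otimes,D,R)$ has the same range as the original $D$ on $C$, since $D(x\otimes e)$ ranges over exactly the projections $\le D(x)$ as $e$ ranges over $D(C)$, and each projection is its own $D$-value.
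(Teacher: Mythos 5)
Your proposal is correct and follows essentially the same route as the paper: it verifies the DR-semigroup laws via the maximality in (APC5), identifies products of projections as meets, proves associativity by characterising both $(x\otimes y)\otimes z$ and $x\otimes(y\otimes z)$ as the $\circ$-product of the unique largest triple below $x,y,z$ (using (APC8) to factor and then appealing to symmetry), and derives the ample conditions from the uniqueness-below-a-common-element lemmas. The only difference is cosmetic: where you invoke Corollary \ref{lemapccor} in the ample-condition step, the paper uses part 2 of Lemma \ref{lemapc} together with maximality, which amounts to the same thing.
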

\begin{proof}
We begin with associativity.  Consider $(s\otimes t)\otimes u$ (where $s,t,u\in C$).  Then this equals $(s'\circ t')'\circ u'$, where $s', t'\in C$ are largest such that $s'\leq s, t'\leq t$ and $s'\circ t'$ exists, and then $(s'\circ t')'\leq s'\circ t'$ and $u'\leq u$ are largest such that $(s'\circ t')'\circ u'$ exists. By (APC\ref{below}), there are $s''\leq s'\leq s, t''\leq t'\leq t$ such that $(s'\circ t')'=s''\circ t''$.  Evidently, $s''\circ t''\circ u'$ exists (recalling that this is unambiguous).  

Now suppose also $s_1\circ t_1\circ u_1$ exists, where $s_1\leq s, t_1\leq t, u_1\leq u$.  By definition of $s',t'$, we have $s_1\leq s', t_1\leq t'$, so $s_1\circ t_1\leq s'\circ t'$ by (APC3).  Hence, since $(s_1\circ t_1)\circ u_1$ exists, we must have that $s_1\circ t_1\leq (s'\circ t')'=s''\circ t''$ and $u_1\leq u'$.  So, $D(s_1)\leq D(s'')\leq D(s)
$, and so $s_1\leq s''$ by \ref{uniqueD} in Lemma \ref{lemapc}, and dually $t_1\leq t''$.  It follows that $s'',t'',u'$ are largest subject to $s''\leq s, t''\leq t, u'\leq u$ and $s''\circ t''\circ u'$ existing.  By symmetry, $s\otimes(t\otimes u)$ must equal the product of these three largest elements subject to these constraints as well.

That $D(s)\circ s$ exists and equals $s$ shows that $D(s)\otimes s=D(s)\circ s=s$; dually, $s\otimes R(s)=s$.  Of course, $R(D(s))=D(s), D(R(s))=R(s)$ for all $s$.  For $e,f\in D(C)$, (APC\ref{pseudo}) gives that $e\otimes f$ is the meet of $e,f$ under $\leq$, and so $e\leq f$ if and only if $f\otimes e=e\otimes f=e$, and so because $D(s\otimes t)=D(s'\circ t')=D(s')$ for some $s'\leq s, t'\leq t$, we must have that $D(s\otimes t)\leq D(s)$ and dually for $R$, so $D(s\otimes t)\otimes D(s)=D(s)\otimes D(s\otimes t)=D(s\otimes t)$.  Hence, $S(C)$ is a DR-semigroup, and obviously $D(S(C))=D(C)$.  

It remains to check that $S(C)$ satisfies the ample conditions.  But for $s\in C, e\in D(C)$, $s\otimes e=s'\circ e'$ for some $s'\leq s,e'\leq e$ (so $e'\in D(C)$ by 1 in Lemma \ref{lemapc}), so $s\otimes e=s'$, whereas $D(s\otimes e)\otimes s=D(s')\otimes s=f\circ s''$ where $f\leq D(s'), s''\leq s$, so $f=D(s'')\leq D(s')$, so $s''\leq s'$ by 2 in Lemma \ref{lemapc}, and so because $D(s')\circ s'$ exists, maximality ensures that $D(s\otimes e)\otimes s=D(s')\circ s'=s'=s\otimes e$.  Dually for the other law.
\end{proof}

\begin{thm}
The constructions $S\mapsto C(S), C\mapsto S(C)$ are mutually inverse.
\end{thm}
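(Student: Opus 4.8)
The statement asserts two things: for every DR-semigroup $S$ satisfying the ample conditions, $S(C(S)) = S$; and for every ample partial category $C$, $C(S(C)) = C$. In both directions the underlying set and the unary operations $D, R$ are literally unchanged (the two preceding theorems already record $D(C(S)) = D(S)$ and $D(S(C)) = D(C)$), so in each case everything reduces to checking that the recovered multiplication --- and, in the second direction, the recovered order --- agrees with the original.

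The first direction is immediate from what we have. In $C(S) = (S, \circ, D, R, \leq)$ the pseudoproduct of $x, y$ is $x' \circ y'$, where $x' \leq x$ and $y' \leq y$ are largest with $x' \circ y'$ defined under the (cat,trace)-product $\circ$. By Proposition \ref{corlargest} these are $x' = D(xy)x$ and $y' = yR(xy)$, and by Proposition \ref{split} we have $D(xy)x \circ yR(xy) = xy$. Hence the pseudoproduct of $C(S)$ is exactly the original product of $S$, so $S(C(S)) = S$.

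For the second direction, write $\otimes$ for the pseudoproduct of $C$, so $S(C) = (C, \otimes, D, R)$, and let $\circ'$ and $\leq'$ be the (cat,trace)-product and the standard order of $S(C)$, so $C(S(C)) = (C, \circ', D, R, \leq')$; we must show $\circ' = \circ$ and $\leq' = \leq$. I would first note that for $e, f \in D(C)$ the pseudoproduct $e \otimes f$ is, by (APC\ref{pseudo}) together with Lemma \ref{lemapc}(1) (which forces the two maximal witnesses into $D(C)$, whence they are equal), the meet of $e, f$ in $(D(C), \leq)$; consequently the natural order on $D(S(C))$ is exactly the restriction of $\leq$ to $D(C)$. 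For the composition: if $x \circ y$ is defined in $C$, then $R(x) = D(y)$, and maximality in (APC\ref{pseudo}) forces $x \otimes y = x \circ y$, so $D(x \otimes y) = D(x)$ and $R(x \otimes y) = R(y)$; thus $cat(x,y)$ and $trace(x,y)$ both hold in $S(C)$ and $x \circ' y = x \otimes y = x \circ y$. Conversely, if $x \circ' y$ is defined then $trace(x,y)$ holds; writing $x \otimes y = x' \circ y'$ with $x' \leq x$, $y' \leq y$ largest, we get $D(x') = D(x' \circ y') = D(x \otimes y) = D(x)$, so $x' = x$ by (APC\ref{leqD}), and dually $y' = y$ by (APC\ref{leqR}); hence $x \otimes y = x \circ y$ is defined and $x \circ' y = x \circ y$. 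For the order: if $s \leq t$ in $C$ then $D(s) \leq D(t)$ by (APC4), and the pair $(D(s), s)$ --- legitimate since $D(s) \circ s = s$ --- together with maximality in (APC\ref{pseudo}) and (APC\ref{leqD}) yields $D(s) \otimes t = s$, so $s \leq' t$. Conversely, if $s \leq' t$ then $s = D(s) \otimes t = e' \circ t'$ with $e' \leq D(s)$, $t' \leq t$ largest such that $e' \circ t'$ is defined; since $e' \in D(C)$ by Lemma \ref{lemapc}(1), $e' \circ t'$ defined forces $e' = D(t')$ and hence $e' \circ t' = t'$, so $s = t' \leq t$. This gives $\leq' = \leq$, and so $C(S(C)) = C$.

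The step I expect to demand the most care is the converse direction for the composition --- that $x \circ' y$ being defined in $S(C)$ forces $x \circ y$ to be defined in $C$. This is precisely where the \emph{trace} half of the (cat,trace)-product does the work: axioms (APC\ref{leqD})--(APC\ref{leqR}) are exactly what collapse the pseudoproduct witnesses $x', y'$ back to $x, y$. Using the bare cat-product in place of the (cat,trace)-product would break this, since $C$ is only assumed to be a partial category rather than a category; this is why the (cat,trace)-product --- and with it the equivalence $trace(x,y) \Rightarrow cat(x,y)$ of Theorem \ref{equiv} --- is the appropriate tool here.
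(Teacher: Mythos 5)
Your proposal is correct and follows essentially the same route as the paper: the first direction via Propositions \ref{split} and \ref{corlargest}, and the second direction by using (APC\ref{leqD})/(APC\ref{leqR}) to collapse the pseudoproduct witnesses and by analysing $D(s)\otimes t$ to recover the order. The only cosmetic difference is that you invoke (APC\ref{leqD}) directly where the paper cites Corollary \ref{lemapccor}, and you make explicit the (true and worth noting) fact that the natural order on $D(S(C))$ is the restriction of $\leq$.
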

\begin{proof}
If $S$ is a DR-semigroup satisfying the ample conditions, the fact that the pseudoproduct on $S(C(S))$ coincides with the product on $S$ is a consequence of Propositions \ref{split} and \ref{corlargest}.  Of course, $D$ and $R$ are unchanged.

Conversely, suppose $(C,\circ,D,R,\leq)$ is an ample partial category.  Define $s*t=s\otimes t$ whenever $cat(s,t)$ and $trace(s,t)$ holds in $(S,\otimes,D,R)$; this is the partial category product in $C(S(C))$.  Now for all $s,t\in C$, $s*t$ exists in $C(S(C))$ if and only if $R(s)=D(t)$, $D(s\otimes t)=D(s)$ and $R(s\otimes t)=R(t)$.  So if $s*t$ exists then writing $s\otimes t=s'\circ t'$ for some $s'\leq s, t'\leq t$ in $C$, we have that $D(s'\circ t')=D(s)$, so $D(s')=D(s)$ and so $s'=s$ by (APC\ref{leqD}) for ample partial categories, and dually $t'=t$, so $s*t=s\otimes t=s\circ t$.  Conversely, if $s\circ t$ exists then obviously $s\otimes t=s\circ t$, so $R(s)=D(t)$, $D(s\otimes t)=D(s)$ and $R(s\otimes t)=R(t)$, and so $s*t$ exists.  Again, $D$ and $R$ are unchanged in passing from $C$ to $C(S(C))$.  

Finally, we must check that the partial order $\leq'$ on $C(S(C))$, namely the one on $S(C)$, agrees with the original one on $C$, $\leq$. Pick $s,t\in C$, and note that $D(s)\otimes t=e\circ t'$ for some $e\leq D(s)$, so $e\in D(C)$, and $t'\leq t$, with $D(t')=e$, so $D(s)\otimes t=t'\leq t$.  First, suppose $s\leq t$ in $C$.  Then $D(s)\leq D(t)$ and so in $D(s)\otimes t=e\circ t'$, by maximality we must have $e=D(s)$ since $D(s)\circ s$ exists and $D(s)\leq D(s), s\leq t$.  Hence $t'\leq t$ and $D(t')=e=D(s)$, so $t'=s$ since $s\leq t$ upon using Corollary \ref{lemapccor}, and so $D(s)\otimes t=D(s)\circ s=s$; hence $s\leq' t$.  Conversely, suppose $s\leq' t$; then $s=D(s)\otimes t=t'\leq t$, so $s\leq t$.  Hence, $\leq$ and $\leq'$ coincide.
\end{proof}

The class of DR-semigroups is a category with morphisms the semigroup homomorphisms respecting $D$ and $R$; within this is the full subcategory of DR-semigroups satisfying the ample conditions.  The class of (small) ample partial categories can be made into a category by taking the morphisms to be ``functors" respecting at least the partial order.  Here, a ``functor" $F:C\rightarrow D$ between partial categories is a mapping that preserves the partial products as well as $D$ and $R$.

However, to obtain a category isomorphism, it is not enough to assume that functors preserve the partial order.  In the case of Ehresmann categories, it was also necessary to assume that the functors preserve meets of identities.  In our case, even this is not sufficient (although the condition we obtain is equivalent to this in the Ehresmann case).

Thus if $C_1,C_2$ are ample partial categories, we say $f:C_1\rightarrow C_2$ is an {\em ample functor} if, for all $s,t\in C_1$,
\begin{itemize}
\item if $s\circ t$ exists then so does $f(s)\circ f(t)$, and $f(s\circ t)=f(s)\circ f(t)$;
\item $f(D(s))=D(f(s))$ and $f(R(s))=R(f(s))$;
\item if $s\leq t$ then $f(s)\leq f(t)$; and
\item if $e$ is the matching term for $s,t$ then $f(e)$ is the matching term for $f(s),f(t)$.
\end{itemize}

\begin{pro}
Suppose $C_1,C_2$ are ample partial categories, with $f:C_1\rightarrow C_2$ an ample functor.  Then for $s,t\in C_1$:
\begin{itemize}
\item if $s',t'$ are the largest $s'\leq s,t'\leq t$ for which $s'\circ t'$ exists, then $f(s'),f(t')$ are the largest $u,v\in C_2$ such that $u\leq f(s)$ and $v\leq f(t)$ and $u\circ v$ exists.
\end{itemize}
\end{pro}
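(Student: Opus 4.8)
The plan is to check that $(f(s'),f(t'))$ is one of the competing pairs for $f(s),f(t)$ in $C_2$, and then to use the matching-term clause in the definition of ample functor, together with the rigidity result Corollary~\ref{lemapccor}, to upgrade the resulting ``$\le$'' to an equality.

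First I would apply (APC\ref{pseudo}) in $C_2$ to get a largest pair $u\le f(s)$, $v\le f(t)$ for which $u\circ v$ exists; the goal is to show $u=f(s')$ and $v=f(t')$. Since $f$ preserves the partial order, $f(s')\le f(s)$ and $f(t')\le f(t)$; since $f$ preserves partial products, the existence of $s'\circ t'$ gives that $f(s')\circ f(t')$ exists. Hence $(f(s'),f(t'))$ is among the competing pairs, so by maximality of $(u,v)$ we already have $f(s')\le u$ and $f(t')\le v$, and in particular $f(R(s'))=R(f(s'))\le R(u)$ by (APC4); it remains to force equality.

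This is where the ample-functor hypothesis does the work. Write $e=R(s')=D(t')$, the common value existing by (PC2) since $s'\circ t'$ exists; by the discussion following the restriction/corestriction proposition, $e$ is exactly the matching term for $s,t$ in $C_1$, and likewise $R(u)=D(v)$ is the matching term for $f(s),f(t)$ in $C_2$. Because $f$ is an ample functor, $f(e)$ is the matching term for $f(s),f(t)$, so $R(u)=D(v)=f(e)$. On the other hand $R(f(s'))=f(R(s'))=f(e)$ and $D(f(t'))=f(D(t'))=f(e)$. Thus $u$ and $f(s')$ both lie below $f(s)$ and have the same range $f(e)$, so $u=f(s')$ by Corollary~\ref{lemapccor}; dually $v$ and $f(t')$ both lie below $f(t)$ and have the same domain $f(e)$, so $v=f(t')$. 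This gives the claim.

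The only genuine obstacle is the identification $R(u)=f(e)$: order preservation alone yields only $f(e)\le R(u)$, and it is precisely to pin this down that the matching-term condition was included in the definition of ample functor (it is equivalent to preservation of meets of identities in the Ehresmann case, but strictly stronger here). Everything else is routine bookkeeping with the elementary consequences of the ample-partial-category axioms already recorded in Lemma~\ref{lemapc} and Corollary~\ref{lemapccor}.
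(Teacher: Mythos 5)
Your proof is correct and follows essentially the same route as the paper: both arguments hinge on the fact that the matching term for $f(s),f(t)$ is $f(e)$ (the ample-functor clause) and then on the rigidity of Corollary~\ref{lemapccor} to identify $f(s')$ with the maximal $u$. The only cosmetic difference is that the paper phrases the final identification via the corestriction notation $f(s)|g$ (whose uniqueness is itself a consequence of Corollary~\ref{lemapccor}), whereas you invoke the corollary directly.
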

\begin{proof}
Pick $s,t\in C_1$; if $s',t'$ are the largest $s'\leq s,t'\leq t$ for which $s'\circ t'$ exists, then $e=R(s')=D(t')$ is the matching term of $s,t$, and so $g=f(e)$ is the matching term for $f(s),f(t)$.  Hence, $u=f(s)|g, v=g|f(t)$ are the largest $u\leq f(s), v\leq f(t)$ such that $u\circ v$ exists.  Next note that $f(s|e\circ e|t)=f(s|e)\circ f(e|t)$, with $f(s|e)\leq f(s)$, $f(e|t)\leq f(t)$, and $R(f(s|e))=f(R(s|e))=f(e)=g$, so $f(s')=f(s|e)=f(s)|g=u$, and dually $f(t')=v$. 
\end{proof}

The following is a generalisation of Theorem 5.7 in \cite{Lawson91}, where restriction semigroups were called idempotent-connected semigroups and the relevant class of categories was called the class of inductive$_1$-categories.

\begin{thm}
The category of DR-semigroups satisfying the ample conditions is isomorphic to the category of ample partial categories.
\end{thm}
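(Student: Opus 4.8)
The plan is to promote the object-level bijection $S\mapsto C(S)$, $C\mapsto S(C)$ (already shown to be mutually inverse) to an isomorphism of categories by checking that each construction is compatible with morphisms, the underlying function being left unchanged in both directions. Thus I would define the functor from DR-semigroups satisfying the ample conditions to ample partial categories on objects by $S\mapsto C(S)$ and on a morphism $\phi$ simply by reinterpreting the same set map; and dually for the functor $C\mapsto S(C)$, sending an ample functor $f$ to the same set map. Since on objects $C(S(C))=C$ and $S(C(S))=S$, and the maps of morphisms do not alter the underlying function, the two functors will automatically be mutually inverse and in particular functorial (identities and composites preserved), once composition of ample functors is seen to again be an ample functor --- which holds because the matching-term clause passes through composites: if $f$ sends the matching term of $x,y$ to that of $f(x),f(y)$ and $g$ sends the latter to that of $gf(x),gf(y)$, then so does $gf$. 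So everything reduces to two well-definedness checks.

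First I would verify that a homomorphism $\phi:S_1\to S_2$ of DR-semigroups satisfying the ample conditions, respecting $D$ and $R$, is an ample functor $C(S_1)\to C(S_2)$. Preservation of $D$ and $R$ is given. The standard order $\leq$ coincides with $\leq_r$, hence is definable from $D$ and the semigroup product ($s\leq t$ iff $D(s)=D(s)D(t)$ and $s=D(s)t$, using that $D(S)$ commutes by Proposition \ref{several}), so $\phi$ is monotone. The $(cat,trace)$-product is determined by $cat(x,y)$ and $trace(x,y)$, both conjunctions of equations in $D$, $R$ and multiplication; since $\phi$ respects all of these, $x\circ y$ existing forces $\phi(x)\circ\phi(y)$ to exist with $\phi(x\circ y)=\phi(x)\circ\phi(y)$. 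Finally, by Proposition \ref{corlargest} together with the proof of Theorem \ref{equiv}, the matching term of $x,y$ in $C(S)$ is $R(D(xy)x)$ (equivalently $D(yR(xy))$), again a term in the DR-signature, so $\phi$ carries the matching term of $x,y$ to that of $\phi(x),\phi(y)$.

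Conversely, I would verify that an ample functor $f:C_1\to C_2$ is a homomorphism $S(C_1)\to S(C_2)$ respecting $D$ and $R$. Preservation of $D$ and $R$ is part of the definition of ample functor. For the pseudoproduct, the Proposition proved immediately before the theorem says precisely that if $x',y'$ are the largest elements below $x,y$ whose $\circ$-product exists, then $f(x'),f(y')$ are the largest elements below $f(x),f(y)$ whose $\circ$-product exists; hence $f(x\otimes y)=f(x'\circ y')=f(x')\circ f(y')=f(x)\otimes f(y)$, so $f$ is a semigroup homomorphism for $\otimes$. Combining the two checks with the remarks of the first paragraph then yields the claimed isomorphism of categories.

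The main obstacle is the matching-term verification in the first check: the ample-functor axioms demand not merely that $\leq$ and the partial products be preserved (as in the Ehresmann/inductive$_1$ case) but also that the matching term be preserved, and this extra clause is exactly what could fail for a generic order- and product-preserving map. The resolution is the observation that in $C(S)$ the matching term is given by the explicit DR-term $R(D(xy)x)$, so a DR-homomorphism has no choice but to preserve it; once this is in hand, the remaining verifications are routine bookkeeping.
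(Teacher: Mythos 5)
Your proposal is correct and follows essentially the same route as the paper: both directions reduce to the observations that $cat$, $trace$, the order $\leq_r$, and the matching term $R(D(xy)x)$ are all expressible by equations in the DR-signature (so a $D,R$-respecting homomorphism automatically preserves them), while the converse uses the preceding proposition that ample functors preserve the maximal pair $x'\leq x$, $y'\leq y$ with $x'\circ y'$ defined, hence the pseudoproduct. The only addition beyond the paper's argument is your explicit check that composites of ample functors are ample functors, which is a harmless (and correct) piece of bookkeeping the paper leaves implicit.
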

\begin{proof}
Suppose $S,T$ are DR-semigroups satisfying the ample conditions and that $f:S\rightarrow T$ is a homomorphism respecting $D,R$.  If $a,b\in S$ are such that $a\circ b$ exists in $C(S)$, then $cat(a,b)$ and $trace(a,b)$ hold, so applying the homomorphism property of $f$ we obtain $cat(f(a),f(b))$ and $trace(f(a),f(b))$ hold, and so $f(a)\circ f(b)$ exists and evidently equals $f(a)f(b)=f(ab)=f(a\circ b)$, again from the homomorphism property of $f$. Of course, $f$ respects $D$ and $R$.  Moreover, if $a,b\in S$ are such that $a\leq b$ then $D(a)\leq D(b)$, so $D(a)=D(a)D(b)$ and so $D(f(a))=f(D(a))=f(D(a)D(b))=D(f(a))D(f(b))$, so $D(f(a))\leq D(f(b))$, but also $a=D(a)b$, so $f(a)=D(f(a))f(b)$, so $f(a)\leq f(b)$.  So $f$ can be viewed as a functor $C(S)\rightarrow C(T)$ that respects $\leq$.  Furthermore, we know that $x'=D(xy)x=xR(D(xy)x)$ and $y'=yR(xy)=D(yR(xy))y$ are the largest $x'\leq x, y'\leq y$ for which $x'\circ y'$ exists by Proposition \ref{corlargest}, and that the matching term for $x,y$ is $e=R(D(xy)x)=D(yR(xy))$; similarly, the matching term for $f(x),f(y)$ in $C(T)$ is $$R(D(f(x)f(y))f(x))=R(D(f(xy))f(x))=R(f(D(xy)x))=f(R(D(xy)x))=f(e).$$ It now follows that we have a functor $F$ from the category of DR-semigroups satisfying the ample conditions to the category of ample partial categories, in which $F(S)=C(S)$ for any DR-semigroup $S$, and for the homomorphism $f:S\rightarrow T$, $F(f):C(S)\rightarrow C(T)$ is defined by setting $F(f)=f$ as functions.

Conversely, suppose $g:C_1\rightarrow C_2$ is an ample functor.  Of course, $D$ and $R$ are preserved, and for all $s,t\in S$, $s\otimes t=s'\circ t'$, where $s'\leq s,t'\leq t$ are largest such that $s'\circ t'$ exists.  Then $g(s\otimes t)=g(s'\circ t')=g(s')\circ g(t')=g(s)\otimes g(t)$ by the previous proposition and the definition of $\otimes$.  This shows that we have a functor $G$ from the category of ample partial categories to the category of DR-semigroups satisfying the ample conditions in which $G(C)=S(C)$ and for the ample functor $g:C_1\rightarrow C_2$, $G(g): S(C_1)\rightarrow S(C_2)$ is defined by setting $G(g)=g$ as functions.

The functors $F$ and $G$ are obviously mutually inverse, completing the proof.
\end{proof}

\section{An embedding theorem}  \label{sec:rep}

We can now give a representation theorem for DR-semigroups satisfying the ample conditions, generalizing Theorem 3 in \cite{Lawson21}.  

\begin{thm}
Let $S$ be a DR-semigroup satisfying the ample conditions. Then there is a partial category $C$ in which $D(C)$ is partially ordered and an injective morphism $f:S \rightarrow {\mathcal PI}(C)$.
\end{thm}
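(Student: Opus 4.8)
The plan is to imitate the classical approach to ESN-style embedding/representation theorems: build the category $C$ directly out of the structure of $S$, verify it is a partial category with $D(C)$ partially ordered, and then exhibit each element $s\in S$ as a subset of $C$ that turns out to be a partial isometry. The natural candidate for $C$ is the \emph{derived ample partial category} $C(S)=(S,\circ,D,R,\leq)$ of the previous section, viewed purely as a partial category $(S,\circ,D,R)$ with $D(C)=D(S)$ partially ordered by the standard order $\leq$. By Corollary \ref{several2} this is indeed a partial category, so that part is free, and it already carries the needed partial order on its projections.

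Next I would define the representation map. The obvious choice, mirroring the Wagner--Preston/Lawson philosophy, is to send $s\in S$ to the down-set it determines: $f(s)=\{t\in S\mid t\leq s\}$, where $\leq$ is the standard order. First I would check that $f(s)\in B(C)=B(P(C))$; by Proposition \ref{bidet} this amounts to showing that for $t_1,t_2\leq s$ one has $D(t_1)\leq D(t_2)$ iff $R(t_1)\leq R(t_2)$, which should follow from the monotonicity of $D,R$ and parts (APC\ref{leqD})/(APC\ref{leqR}) of the ample partial category $C(S)$ (or equivalently Corollary \ref{lemapccor}): within the principal down-set of $s$, an element is determined by its $D$-value and also by its $R$-value. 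Then I would verify $f(s)$ is actually a partial isometry, i.e. that distinct elements of $f(s)$ have distinct $D$-values; again this is Corollary \ref{lemapccor} applied with $z=s$. So $f(s)\in {\mathcal PI}(C)$ for every $s$.

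Then comes checking that $f$ is a morphism of DR-semigroups: $f(st)=f(s)f(t)$ (product in $P(C)$, i.e. the set of all existing $\circ$-products of an element of $f(s)$ with an element of $f(t)$), and $f(D(s))=D(f(s))$, $f(R(s))=R(f(s))$. The latter two are routine: $D(f(s))$ is the down-set of projections below some element of $f(s)$, which is exactly the down-set of $D(s)$, i.e. $f(D(s))$. For the product, the key identity is Proposition \ref{split}: $st=D(st)s\circ tR(st)$, together with Proposition \ref{corlargest} describing $D(st)s$ and $tR(st)$ as the \emph{largest} $x'\le s$, $y'\le t$ whose $\circ$-product exists, and Law (APC\ref{below}) which says every element below $x'\circ y'$ factors as a $\circ$-product of something below $x'$ and something below $y'$. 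Concretely: $f(s)f(t)=\{x'\circ y'\mid x'\le s,\ y'\le t,\ x'\circ y'\text{ exists}\}$; monotonicity (APC3) gives $x'\circ y'\le D(st)s\circ tR(st)=st$, so this set is contained in $f(st)$; conversely any $w\le st$ factors by (APC\ref{below}) as $x'\circ y'$ with $x'\le D(st)s\le s$ and $y'\le tR(st)\le t$, giving the reverse inclusion. Finally, injectivity of $f$ is immediate since $s=D(s)s$ is the top element of $f(s)$, so $f(s)=f(s')$ forces $s=s'$ (alternatively, $s\in f(s)$ and $s$ is the $\leq$-maximum of $f(s)$).

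The main obstacle I anticipate is not any single deep step but making the bookkeeping around the product identity airtight: one must be careful that the description of $f(s)f(t)$ as a set of $\circ$-products genuinely matches the principal down-set of $st$, using (APC\ref{below}) for one inclusion and monotonicity plus Proposition \ref{corlargest} for the other, and that the uniqueness clauses (Corollary \ref{lemapccor}) are invoked correctly so that no spurious elements appear. A secondary point requiring care is confirming that with $D(C)$ ordered by $\le$ (not equality), the notion of ${\mathcal PI}(C)$ used here is the one from the generalized definition given before Proposition 3.22, so that Propositions \ref{bidet} and the partial-isometry characterisation apply verbatim; this is exactly the extra generality the paper flagged as being introduced for this theorem.
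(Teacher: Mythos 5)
Your proposal is correct and follows essentially the same route as the paper: the same choice $C=C(S)$, the same map $f(s)=\{t\mid t\leq s\}$, and the same verifications, with only cosmetic differences (the paper checks the homomorphism inclusion $f(ab)\subseteq f(a)f(b)$ and the partial-isometry property by direct semigroup computation, whereas you route them through Law (APC8) applied to the decomposition $ab=D(ab)a\circ bR(ab)$ and through Lemma \ref{lemapc}/Corollary \ref{lemapccor}, all of which is valid since $C(S)$ is an ample partial category).
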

\begin{proof}
As usual, denote by $\leq$ the standard order on $S$.  Let $C=C(S)$ be the derived ample partial category of $S$; then in particular $D(C)$ is partially ordered by $\leq$ and so $P(C)$ may be constructed.  Define $f:S\rightarrow P(C)$ by setting $f(s)=\{t\in S\mid t\leq s\}$, for all $s\in S$.  

We first show that $f$ is a semigroup homomorphism.  Suppose $a,b,c\in S$ are such that $c\in f(ab)$, so that $c\leq ab$.  Then $c=D(c)ab$ with $D(c)\leq D(ab)\leq D(a)$ by the second part of Lemma \ref{DRbits}.  Hence, because $xy=D(xy)x\circ yR(xy)$ using the (cat,trace)-product, we have
$$c=(D(c)a)b=D(D(c)ab)D(c)a\circ bR(D(c)ab)=D(c)a\circ bR(c)=a'\circ b',$$
where $a'\leq a,b'\leq b$, and so $c\in f(a)f(b)$.  Hence, $f(ab)\subseteq f(a)f(b)$.

Conversely, suppose $c\in f(a)f(b)$, so $c=a'\circ b'$ for some $a'\leq a,b'\leq b$.  But $a'\circ b'=a'b'\leq ab$ by (4) in Proposition \ref{several}, and so $c\in f(ab)$.  Hence, $f(ab)= f(a)f(b)$. 

Next, we show that $f$ respects $D$ and $R$.  But if $e\in D(f(a))$ then $e\leq D(b)$ for some $b\leq a$, so $e\leq D(b)\leq D(a)$ and so $e\in f(D(a))$.  Conversely, if $e\in f(D(a))$ then $e\leq D(a)$ and so obviously $e\in D(f(a))$.  Hence $D(f(a))=f(D(a))$.  Dually for $R$.

Pick $s\in S$.  Suppose $a,b\in f(s)$, meaning that $a\leq s, b\leq s$, so $a=D(a)s$, $b=D(b)s$, $D(a)\leq D(s)$ and $D(b)\leq D(s)$.  If $D(a)\leq D(b)$, then $a=D(a)s=D(a)D(b)s=D(a)b$ also, so $a\leq b$ and so $R(a)\leq R(b)$.  Dualise to get the converse.  Moreover, if $D(a)=D(b)$, then $a=D(a)s=D(b)s=b$. So $f$ embeds $S$ in ${\cal PI}(C)$ as a DR-semigroup.
\end{proof}

\section*{Acknowledgements}

The author is very grateful to the anonymous referee for comments that led to significant improvements in the content and layout of the article, especially the first and third sections.

\vspace{2cm}

\noindent Tim Stokes\\
Mathematics\\
University of Waikato\\
Hamilton 3216\\
New Zealand.\\
email: tim.stokes@waikato.ac.nz

\end{document}